\numberwithin{equation}{section}
\newtheorem{Theorem}{Theorem}[section]
\newtheorem{Lemma}{Lemma}[section]
\newtheorem{Proposition}{Proposition}[section]
\theoremstyle{definition}
\theoremstyle{remark}
\newtheorem{Remark}{Remark}[section]
\renewcommand{\r}{\rho}
\renewcommand{\u}{{\bf u}}
\renewcommand{\H}{{\bf H}}
\newcommand{\R}{{\mathbb R}}
\newcommand{\Dv}{{\rm div}}
\newcommand{\m}{{\bf m}}
\newcommand{\na}{\nabla}
\newcommand{\dl}{\delta}
\def\f{\frac}
\renewcommand{\O}{\Omega}
\def\ov{\overline}
\def\hf1{^\f{1}{1-\xi^2}}
\def\be{\begin{equation}}
\def\en{\end{equation}}
\def\bs{\begin{split}}
\def\es{\end{split}}
\newcommand{\ess}{{\rm ess\sup}}
\def\Xint#1{\mathchoice
   {\XXint\displaystyle\textstyle{#1}}%
   {\XXint\textstyle\scriptstyle{#1}}%
   {\XXint\scriptstyle\scriptscriptstyle{#1}}%
   {\XXint\scriptscriptstyle\scriptscriptstyle{#1}}%
   \!\int}
\def\XXint#1#2#3{{\setbox0=\hbox{$#1{#2#3}{\int}$}
    \vcenter{\hbox{$#2#3$}}\kern-.5\wd0}}
\def\dashint{\Xint-}
\author{Xianpeng Hu \and Dehua Wang  }
\address{Department of Mathematics, University of Pittsburgh,
                           Pittsburgh, PA 15260, USA.}
\email{xih15@pitt.edu}
\address{Department of Mathematics, University of Pittsburgh,
                           Pittsburgh, PA 15260, USA.}
\email{dwang@math.pitt.edu}
\title[Global Solutions of  Magnetohydrodynamic Equations]
{Global Existence and Large-Time Behavior of Solutions to the Three-Dimensional Equations of Compressible Magnetohydrodynamic Flows}
\keywords{Three-dimensional magnetohydrodynamics (MHD) equations, global solutions, large-time behavior, weak convergence, renormalized solutions.}
\subjclass{35Q36, 35D05, 76W05.}
\begin{document}

\begin{abstract}
The three-dimensional equations of compressible magnetohydrodynamic isentropic flows
are considered. An initial-boundary value problem is studied in a bounded domain with large data. The existence and large-time behavior of global weak solutions are established through a three-level approximation, energy estimates, and weak convergence for the adiabatic  exponent $\gamma>\frac32$ and constant viscosity coefficients.
\end{abstract}

\maketitle

\section{Introduction}  \label{sec1}


Magnetohydrodynamics (MHD) concerns the motion of conducting fluids
in an electromagnetic field with a very broad range of applications.
The dynamic motion of the fluid and the magnetic field interact
strongly on each other. The hydrodynamic and
electrodynamic effects are coupled. The equations of three-dimensional
compressible magnetohydrodynamic flows in the isentropic case have the following form (\cite{Ca, KL, LL}):
\begin{equation} \label{e1}
\begin{cases}
\rho_t +\Dv(\r\u)=0, \\
(\r\u)_t+\Dv\left(\r\u\otimes\u\right)+\nabla p
  =(\na \times \H)\times \H+\mu \Delta \u+(\lambda+\mu)\nabla(\Dv\u), \\
\H_t-\nabla\times(\u\times\H)=-\nabla\times(\nu\nabla\times\H),\quad
\Dv\H=0,
\end{cases}
\end{equation}
where $\r$ denotes the density, $\u\in\R^3$ the velocity,
$\H\in\R^3$ the magnetic field, $p(\r)=a\r^\gamma$ the pressure with
constant $a>0$ and the adiabatic exponent $\gamma>1$; the viscosity coefficients of the flow satisfy $2\mu+3\lambda>0$ and $\mu>0$; $\nu>0$ is the magnetic
diffusivity acting as a magnetic diffusion coefficient of the
magnetic field, and all these kinetic coefficients and the magnetic
diffusivity are independent of the magnitude and direction of the
magnetic field. The symbol $\otimes$ denotes the Kronecker tensor product. Usually, we refer to the first equation in \eqref{e1} as the continuity equation, and the second equation as the momentum balance equation. It is well-known that
the electromagnetic fields are governed by the Maxwell's equations.
In magnetohydrodynamics, the displacement current  can be neglected (\cite{KL, LL}).
As a consequence, the last equation in \eqref{e1} is called the induction equation,
and  the electric field can be written in terms of the magnetic
field $\H$ and the velocity $\u$,
\begin{equation*} 
{\bf E}=\nu\nabla\times\H - \u\times\H.
\end{equation*}
Although the electric field ${\bf E}$ does not appear in \eqref{e1},
it is indeed induced according to the above relation by the moving conductive
flow in the magnetic field.

In this paper, we are interested in the global existence and large-time behavior of solutions to the three-dimensional MHD equations \eqref{e1} in a bounded domain $\O\subset \R^3$ with the following initial-boundary conditions:
\begin{equation}\label{e4}
\begin{cases}
\r(x,0)=\r_0(x)\in L^\gamma(\O), \quad \r_0(x)\geq 0,\\
\r(x,0)\u(x,0)=\m_0(x)\in L^1(\O),  \; \m_0=0 \textrm{ if } \r_0=0, \;  \f{|\m_0|^2}{\r_0}\in L^1(\O),\\
\H(x,0)=\H_0(x)\in L^2(\O), \quad \Dv\H_0=0 \textrm{ in } \mathcal{D}'(\O),\\
\u|_{\partial\O}=0, \quad \H|_{\partial\O}=0.
\end{cases}
\end{equation}
There have been a lot of studies on MHD by physicists and
mathematicians because of its physical importance, complexity, rich
phenomena, and mathematical challenges; see \cite{gw, gw2, f3, FJN,
h1, HT, LL, w1} and the references cited therein. In particular, the
one-dimensional problem has been studied in many papers, for
examples, \cite{gw, gw2, FJN,HT,sm,tz,w1} and so on. However,
many fundamental problems for MHD are still open. For example, even
for the one-dimensional case, the global existence of classical
solution to the full perfect MHD equations with large data remains
unsolved when all the viscosity, heat conductivity, and diffusivity
coefficients are constant, although the corresponding problem for
the Navier-Stokes equations was solved in \cite{KS} long time ago.
The reason is that the presence of the magnetic field and its
interaction with the hydrodynamic motion in the MHD flow of large
oscillation cause serious difficulties. In this paper we consider
the global weak solution to the three-dimensional MHD problem with
large data, and investigate the fundamental problems such as global
existence and large-time behavior. A multi-dimensional
nonisentropic MHD system for gaseous stars coupled with the Poisson
equation is studied in \cite{f3}, where all the viscosity
coefficients depend on temperature, and the pressure depends on
density asymptotically like the isentropic case
$p(\r)=a\r^\f{5}{3}$. In this paper, we study the multi-dimensional
isentropic problem \eqref{e1}-\eqref{e4} with $\gamma>\frac32$,
where all the viscosity coefficients $\mu, \lambda, \nu$ are
constant. We remark that $\gamma=\frac53$ for the monoatomic gases.

When there is no electromagnetic field, system \eqref{e1} reduces to the compressible Navier-Stokes equations. See \cite{f2, hoff97,p2} and their references for the studies on
the multi-dimensional Navier-Stokes equations. In particular, to overcome the difficulties of large oscillations of solutions, especially of density,  the concept of a renormalized solutions
is used in \cite{p2,f2}. Based on this idea, we study the initial-boundary value problem \eqref{e1}-\eqref{e4} for the MHD system in a bounded three-dimensional domain $\O$.
The goal of this paper is to
establish the existence of global weak solutions for large initial data
in certain functional spaces for $\gamma>\f{3}{2}$ and to study the large-time behavior of global weak solutions when the magnetic field and interaction present.
The existence of global weak solutions is proved by  using the Faedo-Galerkin method and the vanishing viscosity
method. We first obtain \textit{a priori} estimates directly from \eqref{e1}, which is the backbone of our result. In the proof of
the existence, we use the similar approximation scheme to that in \cite{f1} which consists of Faedo-Galerkin approximation, artificial viscosity, and artificial pressure. Then, motivated by the work in \cite{r1},  we show that an improvement on the
integrability of density can ensure the effectiveness and convergence of our approximation scheme. More specifically, we show that the uniform bound
of $\r^\gamma\textrm{ln}(1+\r)$ in $L^1$,  rather than the uniform bound of $\r^{\gamma+\theta}$ in $L^1$ for some $\theta>0$ as used in \cite{f1, f2, p2}, ensures the vanishing of artificial pressure and the strong convergence of the density.
To overcome the difficulty arising from the possible large oscillations of the density
$\r$, we adopt the method in Lions \cite{p2} and Feireisl \cite{f2} which is based on the celebrated
weak continuity of the effective viscous flux $p-(\lambda+2\mu)\Dv\u$ (see also Hoff \cite{hoff95}).
The estimates obtained by our approach produce further the large-time behavior of the global weak solutions to the initial-boundary value problem \eqref{e1}-\eqref{e4}. To achieve our goal for the MHD problem, we also need to develop estimates to deal with the magnetic field and its coupling and interaction with the fluid variables.
The nonlinear term $(\nabla\times\H)\times\H$ will be dealt with by the idea arising in incompressible Navier-Stokes equations.

We organize the rest of the paper as follows. In Section 2, we derive \textit{a priori} estimates from \eqref{e1},  give the definition of the weak solutions, and  also state our main results and give our approximation scheme. In Section 3, we will show the
unique solvability of the magnetic field in terms of the velocity field. In Section 4, following the method in \cite{f2} and the result obtained in Section 3, we show the existence of solutions to the approximation system.
In Section 5, we follow the technique in \cite{f1} with some modifications to get the strong convergence of $\r$ in $L^1((0,T)\times\O)$. In Section 6, motivated by \cite{f5} we  study the large-time behavior of global weak solutions to \eqref{e1}-\eqref{e4}.

\bigskip

\section{Main Results}

In this section, we reformulate the initial-boundary value problem \eqref{e1}-\eqref{e4} and state the main results.

We first formally derive the energy equation and some \textit{a priori} estimates.
Multiplying the second equation in \eqref{e1} by $\u$, integrating over
$\O$, and using the boundary condition in \eqref{e4}, we obtain
\begin{equation}\label{f1}
\begin{split}
&\f{d}{dt}\int_{\O}\left(\f{1}{2}\r\u^2+\f{a}{\gamma-1}\r^\gamma\right)\mathrm{d}x
  +\int_{\O}\left(\mu|D\u|^2+(\lambda+\mu)(\Dv\u)^2\right)\mathrm{d}x \\
&=\int_{\O}\left((\na \times \H)\times \H\right)\cdot\u\;\mathrm{d}x.
\end{split}
\end{equation}
The term on the right hand side of \eqref{f1} can be rewritten as
$$\int_{\O}((\na \times \H)\times\H)\cdot\u\;\mathrm{d}x
=-\int_{\O}\left(\H^\top\na\u\,\H+\f{1}{2}\na(|\H|^2)\cdot\u\right)\mathrm{d}x.$$
Hence,  \eqref{f1} becomes
\begin{equation}\label{f2}
\begin{split}
&\f{d}{dt}\int_{\O}\left(\f{1}{2}\r\u^2+\f{a}{\gamma-1}\r^\gamma\right)\mathrm{d}x
  +\int_{\O}\left(\mu|D\u|^2+(\lambda+\mu)(\Dv\u)^2\right)\mathrm{d}x \\
&=-\int_{\O}\left(\H^\top\na\u\,\H+\f{1}{2}\na(|\H|^2)\cdot\u\right)\mathrm{d}x.
\end{split}
\end{equation}
Multiplying the third equation in \eqref{e1} by $\H$,
integrating over $\O$, and using the boundary condition in \eqref{e4} and the 
condition $\Dv\H=0$, one has
\begin{equation}\label{f3}
\f{1}{2}\f{d}{dt}\int_{\O}|\H|^2\mathrm{d}x
+\int_{\O}(\nabla\times(\nu\nabla\times\H))\cdot\H\;\mathrm{d}x
=\int_{\O}(\nabla\times(\u\times\H))\cdot\H\;\mathrm{d}x.
\end{equation}
Direct calculations show that
$$\int_{\O}(\nabla\times(\nu\nabla\times\H))\cdot\H\;\mathrm{d}x
=\nu\int_{\O}|\nabla\times\H|^2\mathrm{d}x,$$
$$\int_{\O}(\nabla\times(\u\times\H))\cdot\H\;\mathrm{d}x
=\int_{\O}\left(\H^\top\na\u\,\H+\f{1}{2}\na(|\H|^2)\cdot\u\right)\mathrm{d}x.$$
Thus \eqref{f3} yields
\begin{equation}\label{f4}
\f{1}{2}\f{d}{dt}\int_{\O}|\H|^2\mathrm{d}x
+\nu\int_{\O}|\nabla\times\H|^2\mathrm{d}x
=\int_{\O}\left(\H^\top\na\u\,\H+\f{1}{2}\na(|\H|^2)\cdot\u\right)\mathrm{d}x.
\end{equation}
Adding \eqref{f2} and \eqref{f4} gives
\begin{equation}\label{f5}
\begin{split}
&\f{d}{dt}\int_{\O}
\left(\f{1}{2}\r\u^2+\f{a}{\gamma-1}\r^\gamma+\f{1}{2}|\H|^2\right)\mathrm{d}x\\
&+\int_{\O}\left(\mu|D\u|^2+(\lambda+\mu)(\Dv\u)^2
 +\nu |\nabla\times\H|^2\right)\mathrm{d}x=0.
\end{split}
\end{equation}

From Lemma 3.3 in \cite{pm1}, our assumptions on initial
data, and \eqref{f5}, we have the following \textit{a priori} estimates:
\begin{equation*}
\begin{split}
&\r|\u|^2\in L^{\infty}([0,T]; L^1(\O));\\
&\u\in L^2([0,T]; H^1_0(\O)); \\
&\H\in L^2([0,T]; H^1_0(\O))\cap L^{\infty}([0,T]; L^2(\O)); \\
&\r\in L^{\infty}([0,T]; L^{\gamma}({\O})).
\end{split}
\end{equation*}

Multiplying the continuity equation (i.e., the first equation in \eqref{e1})
by $b'(\rho)$,  we obtain the renormalized continuity equation:
\begin{equation}\label{e5}
b(\rho)_t+\Dv(b(\r)\u)+(b'(\r)\r-b(\rho))\Dv \u=0,
\end{equation}
for some suitable function $b\in C^1(\R^+)$.
Following the strategy in \cite{p2, f2}, we introduce the concept of \textit{finite energy weak solution}  $(\r, \u, \H)$ to the initial-boundary value problem \eqref{e1}-\eqref{e4}
in the following sense:

\begin{itemize}
\item The density $\r$ is a non-negative function,
$$\r\in C([0,T];L^1(\O))\cap L^\infty([0,T]; L^\gamma(\O)), \quad \r(x,0)=\r_0,$$
and the momentum $\r\u$ satisfies
$$\r\u\in C([0,T]; L_{weak}^{\f{2\gamma}{\gamma+1}}(\O));$$

\item The velocity $\u$ and the magnetic field $\H$ satisfy the following:
$$\u\in L^2([0,T]; H_0^1(\O)), \quad
\H\in L^2([0,T]; H_0^1(\O))\cap C([0,T]; L_{weak}^2(\O)),$$
$\r\u\otimes\u$, $\na\times(\u\times\H)$, and $(\na \times \H)\times \H$ are integrable on
$(0,T)\times\O$, and
$$\r\u(x,0)=\m_0, \quad \H(x,0)=\H_0, \quad
\Dv\H=0 \textrm{ in }\mathcal{D}'(\O);$$

\item The system \eqref{e1} is satisfied in
$\mathcal{D}'(\R^3\times (0,T))$ provided that $\r$, $\u$, and $\H$ are prolonged to be zero outside $\O$;

\item The continuity equation in \eqref{e1} is satisfied in the sense of renormalized
solutions, that is, \eqref{e5} holds in $\mathcal{D}'(\O\times(0,T))$ for any $b \in C^1(\R^+)$
satisfying
\begin{equation}\label{x2}
b'(z)=0 \textrm{ for all $z\in \R^+$ large enough, say, } z\geq z_0,
\end{equation}
where the constant $z_0$ depends on the choice of  function b;

\item The energy inequality
$$E(t)
+\int_0^t\int_{\O}\left(\mu|D\u|^2+(\lambda+\mu)(\Dv\u)^2+\nu|\nabla\times\H|^2\right)
\mathrm{d}x\mathrm{d}s\leq E(0),$$
holds for a.e $t\in[0,T]$,
where
$$E(t)=\int_{\O}\left(\f{1}{2}\r\u^2+\f{a}{\gamma-1}\r^\gamma+\f{1}{2}|\H|^2\right)\mathrm{d}x,$$
and
$$E(0)=\int_{\O}\left(\f{1}{2}\f{|\m_0|^2}{\r_0}+\f{a}{\gamma-1}\r_0^\gamma+\f{1}{2}|\H_0|^2
\right)\mathrm{d}x.$$
\end{itemize}

\begin{Remark} As a matter of fact, the function $b$ does not need to be bounded. By Lebesgue Dominated convergence theorem,
we can show that if $\r, \u$ is a pair of finite
energy weak solutions in the renormalized sense, they also satisfy \eqref{e5} for any $b\in C^1(0, \infty)\cap C[0, \infty)$
satisfying
\begin{equation}\label{e12}
|b'(z)z| \leq cz^{\f{\gamma}{2}} \textrm{ for z larger than some
positive constant } z_0.
\end{equation}
\end{Remark}

Now our main result on the existence of {finite energy weak solutions} reads as follows.

\begin{Theorem}\label{mt}
Assume that $\O\subset R^3$ be a bounded domain with a boundary of class $C^{2+\kappa}$, $\kappa>0$, and $\gamma>\f{3}{2}$. Then for any given $T>0$, the initial-boundary value problem \eqref{e1}-\eqref{e4} has a finite energy weak solution $(\r, \u, \H)$
on  $\O\times (0,T)$.
\end{Theorem}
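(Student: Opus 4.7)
The plan is to follow a three-level approximation scheme in the spirit of Feireisl's construction for compressible Navier--Stokes, adapted to carry the magnetic field along. We regularize the continuity equation with an artificial viscosity term $\varepsilon \Delta \r$, augment the pressure with an artificial term $\dl \r^\beta$ for some $\beta$ substantially larger than $\gamma$, and project the momentum equation onto a finite-dimensional space $X_n$ spanned by smooth basis functions (Faedo--Galerkin). We then pass to the limit successively in $n\to\infty$, then $\varepsilon\to 0$, and finally $\dl\to 0$. The stand-alone solution operator $\u\mapsto \H[\u]$ for the induction equation provided by Section~3 lets us decouple the magnetic field from the fixed-point problem at every level.

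For fixed $(n,\varepsilon,\dl)$ and a candidate $\u \in C([0,T_n]; X_n)$, I would solve the parabolic continuity equation with homogeneous Neumann data to obtain a strictly positive smooth $\r=\r[\u]$, construct $\H=\H[\u]$ via Section~3, and feed both into the Galerkin momentum equation; Schauder's fixed-point theorem then yields a local-in-time Galerkin solution. Global existence on $[0,T]$ follows from the approximate energy identity obtained by testing the Galerkin equation with $\u$ and adding the $L^2$ identity for $\H$: the apparently sign-indefinite coupling terms $\H^\top\na\u\,\H + \f{1}{2}\na(|\H|^2)\cd \u$ cancel exactly as in the passage from \eqref{f2} and \eqref{f4} to \eqref{f5}, and this cancellation survives at the discrete level.

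The limit $n\to \infty$ is the most routine: the regularization $\varepsilon>0$ gives parabolic regularity for $\r$, Aubin--Lions handles the convective nonlinearity, and $(\na\times\H)\times\H$ causes no difficulty since $\H\in L^2([0,T];H^1_0)\cap L^\infty([0,T];L^2)$ yields strong $L^2$ compactness of $\H$ on a subsequence. Passing $\varepsilon\to 0$ requires promoting the limit density to a renormalized solution of the continuity equation, which follows once $\r\in L^2((0,T)\times\O)$, a bound guaranteed precisely by the artificial pressure $\dl \r^\beta$ for $\beta$ large. The vanishing pressure limit $\dl\to 0$ is the most delicate: here $\r$ is only bounded in $L^\infty([0,T]; L^\gamma)$, and one must prove strong $L^1$ convergence of $\r_\dl$. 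The plan follows the Lions--Feireisl strategy: establish weak continuity of the effective viscous flux $p-(\l+2\mu)\Dv\u$ by commuting the momentum equation with the operator $\na\Delta^{-1}$, then couple this with the renormalized continuity equation to show the oscillation defect measure vanishes. The magnetic stress can be rewritten as $(\na\times\H)\times\H = -\Dv(\H\otimes\H)+\f{1}{2}\na|\H|^2$, and since $\H_\dl$ converges strongly in $L^2$, this term is compatible with the commutator computation and does not disturb the effective viscous flux identity.

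The main obstacle, flagged in the introduction, is obtaining enough higher integrability of $\r$ uniformly in $\dl$ when $\gamma$ sits just above $\f{3}{2}$. The usual route via a uniform $L^{\gamma+\theta}$ bound on $\r_\dl$ is not obviously available in this regime, so the plan is instead to test the momentum equation against a Bogovskii-type corrector applied to $\r^\gamma \ln(1+\r)$ and extract a uniform $L^1$ bound on $\r_\dl^\gamma \ln(1+\r_\dl)$. This logarithmic improvement is strong enough to kill the artificial pressure term $\dl \int \r_\dl^\beta$ and, when combined with Feireisl's oscillation defect measure machinery and the effective viscous flux identity, forces $\r_\dl \to \r$ a.e. The remaining energy inequality is recovered by weak lower semicontinuity of the dissipative norms, yielding the finite energy weak solution asserted in Theorem~\ref{mt}.
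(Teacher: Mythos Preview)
Your proposal is correct and follows essentially the same route as the paper: three-level approximation (Galerkin, artificial viscosity $\varepsilon$, artificial pressure $\dl$), the decoupled solution operator $\u\mapsto\H[\u]$ for the induction equation, the exact cancellation of the coupling terms in the energy identity, strong $L^2$ compactness of $\H_\dl$ to pass the Lorentz force, and the Lions--Feireisl effective viscous flux/oscillation defect machinery for strong convergence of the density. One small correction: the Bogovskii test function is $B\big[\ln(1+\r_\dl)-\dashint_\O\ln(1+\r_\dl)\big]$, i.e.\ the operator is applied to $\ln(1+\r_\dl)$ (minus its mean), not to $\r_\dl^\gamma\ln(1+\r_\dl)$; the pressure term then produces the desired bound on $\int\r_\dl^\gamma\ln(1+\r_\dl)$.
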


\begin{Remark}
The fluid density $\r$ as well as the momentum $\r\u$ should be recognized in the sense of instantaneous values
(cf. Definition 2.1 in \cite{f2}) for any time $t\in[0,T]$.
\end{Remark}


As a direct application of Theorem \ref{mt}, we have the following result on the large-time behavior of solutions to the problem \eqref{e1}-\eqref{e4}:

\begin{Theorem}\label{mt2}
Assume that $(\r, \u, \H)$ is the finite energy weak solution to \eqref{e1}-\eqref{e4} obtained in Theorem \ref{mt}, then there exist a stationary state
of density $\r_s$ which is a positive constant, a stationary state of velocity $\u_s=0$, and a stationary state of  magnetic field $\H_s=0$ such that, as $t\to\infty$,
\begin{equation}\label{xx1}
\begin{cases}
\r(x,t)\rightarrow \r_s \textrm{ strongly in } L^\gamma(\O);\\
\u(x,t)\rightarrow \u_s=0\textrm{ strongly in }L^2(\O);\\
\H(x,t)\rightarrow \H_s=0\textrm{ strongly in }L^2(\O).
\end{cases}
\end{equation}
\end{Theorem}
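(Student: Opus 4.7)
The plan is to follow the time-shift argument inspired by Feireisl \cite{f5}, adapted to the extra magnetic structure. The starting point is the energy inequality from the definition of finite energy weak solution, which upon sending $t\to\infty$ yields the global dissipation bound
$$\int_0^\infty\int_\O\bigl(\mu|D\u|^2+(\lambda+\mu)(\Dv\u)^2+\nu|\nabla\times\H|^2\bigr)\,\mathrm{d}x\,\mathrm{d}s \;\le\; E(0)<\infty.$$
For an arbitrary sequence $t_n\to\infty$, introduce the shifted triples $\r_n(t,x):=\r(t+t_n,x)$, $\u_n(t,x):=\u(t+t_n,x)$, $\H_n(t,x):=\H(t+t_n,x)$ on a fixed slab $[0,T]$. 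The integrability above, together with Korn's and Poincar\'e's inequalities for $\u_n$ and with the identity $\|\H_n\|_{H^1}\sim\|\nabla\times\H_n\|_{L^2}+\|\Dv\H_n\|_{L^2}$ valid for divergence-free fields vanishing on $\del\O$, forces $\u_n\to 0$ and $\H_n\to 0$ strongly in $L^2([0,T];H_0^1(\O))$.

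Next I would pass to the limit in each equation. The density $\r_n$ is uniformly bounded in $L^\infty([0,T];L^\gamma(\O))$ and the momentum $\r_n\u_n$ converges to zero in $L^2([0,T];L^{\frac{2\gamma}{\gamma+1}}(\O))$, so the continuity equation yields a weak-$\ast$ limit $\r_\infty$ that is independent of $t$. In the momentum equation, the Lorentz term $(\nabla\times\H_n)\times\H_n$ tends to zero in $\mathcal{D}'$ because $\nabla\times\H_n\to 0$ in $L^2(L^2)$ while $\H_n$ stays bounded in $L^\infty(L^2)$; analogous reasoning disposes of $\r_n\u_n\otimes\u_n$ and the viscous terms. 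Passing to weak limits I get $\nabla\overline{p(\r_\infty)}=0$ in $\mathcal{D}'(\O)$, so $\overline{p(\r_\infty)}$ is a spatial constant.

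To promote this to $\r_\infty\equiv\mathrm{const}$, I would establish strong convergence $\r_n\to\r_\infty$ in $L^1((0,T)\times\O)$, invoking the weak continuity of the effective viscous flux $p(\r)-(\lambda+2\mu)\Dv\u$ combined with the renormalized continuity equation, exactly as in the compactness step of Theorem \ref{mt}; under the time shifts, the magnetic source in the momentum equation produces only a perturbation that vanishes in the limit, so the oscillation-defect-measure machinery carries through. Strong convergence and strict monotonicity of $z\mapsto z^\gamma$ then give $\overline{p(\r_\infty)}=p(\r_\infty)$, hence $\r_\infty$ is a positive constant, and mass conservation pins it down as $\r_s=\frac{1}{|\O|}\int_\O\r_0\,\mathrm{d}x$. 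Independence of the subsequence $t_n$ upgrades the convergence to the whole family $t\to\infty$.

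The convergence modes in \eqref{xx1} follow by pairing this weak compactness with the limit $E(t)\to E_\infty=\int_\O\frac{a}{\gamma-1}\r_s^\gamma\,\mathrm{d}x$, which exists because $E$ is essentially non-increasing; matching the weak limits with the limits of the norms promotes weak convergence of $\r(t,\cdot)$ in $L^\gamma$ and of $\u(t,\cdot),\H(t,\cdot)$ in $L^2$ to strong convergence. I expect the main obstacle to be the strong compactness of $\r_n$: adapting the effective-viscous-flux identity and the renormalization argument in the presence of the quadratic magnetic nonlinearity, and verifying that its contribution to the commutators is negligible in the asymptotic regime, is where the real work beyond the Navier--Stokes case \cite{f5} lies.
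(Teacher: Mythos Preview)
Your outline is sound and follows the same skeleton as the paper: energy dissipation $\Rightarrow$ time-shifted sequences with $\u_n,\H_n\to 0$ in $L^2_tH^1_x$ $\Rightarrow$ limit in the continuity equation gives $\r_\infty$ independent of $t$ $\Rightarrow$ limit in the momentum equation gives $\nabla\overline{\r^\gamma}=0$ $\Rightarrow$ strong compactness of $\r_n$ $\Rightarrow$ energy-matching to upgrade to strong convergence in the stated norms.

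The one genuine methodological difference is in the density-compactness step. You propose to rerun the effective-viscous-flux/oscillation-defect-measure machinery from the existence proof, checking that the magnetic contribution is harmless. The paper takes a shorter route that exploits the asymptotic regime directly: it applies the $L^p$ div--curl lemma in \emph{space--time} to the pair of $4$-vectors $[G(\r_m^\gamma),0,0,0]$ and $[\r_m^\gamma,0,0,0]$ with $G(z)=z^\alpha$ for small $\alpha>0$. The renormalized continuity equation shows that the space--time divergence of the first vector is precompact in $W^{-1,q_1}_{\mathrm{loc}}$, while the momentum equation (in which every term except $\nabla\r_m^\gamma$ vanishes thanks to \eqref{x3}, \eqref{x4b}) shows that the space--time curl of the second is precompact in $W^{-1,q_2}_{\mathrm{loc}}$. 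Compensated compactness then yields $\overline{G(\r^\gamma)}\,\overline{\r^\gamma}=\overline{G(\r^\gamma)\r^\gamma}$, and strict monotonicity of $G$ upgrades this to strong convergence of $\r_m^\gamma$ in $L^1$. The point is that because $\u_m\to 0$, one never needs the delicate commutator identities or the cut-offs $T_k$; the argument is essentially two pages rather than a reprise of Section~5. Your approach would also succeed, but it imports more technology than the situation demands and forces you to revisit the magnetic commutator terms that the paper's route sidesteps entirely.
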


The proof of Theorem \ref{mt} is based on the following approximation problem:
\begin{equation}\label{e7}
\begin{cases}
\rho_t +\Dv(\r\u)=\varepsilon\Delta\r, \\
(\r\u)_t+\Dv\left(\r\u\otimes\u\right)+a\nabla \r^\gamma+\delta\nabla \r^\beta+\varepsilon\nabla\u\cdot\nabla\r\\
\hskip 4cm =(\na \times \H)\times \H+\mu \Delta \u+(\lambda+\mu)\nabla\Dv\u, \\
\H_t-\nabla\times(\u\times\H)=-\nabla\times(\nu\nabla\times\H),\quad
\Dv\H=0,
\end{cases}
\end{equation}
with the  initial-boundary conditions which will be specified in Section 4,
  where $\beta>0$ is a constant to be determined later, and  $\varepsilon>0, \delta>0$. Taking $\varepsilon\to 0$ and $\delta\to 0$ in \eqref{e7} will give the solution of \eqref{e1} in Theorem \ref{mt}.
We remark that the nonlinear term $(\nabla\times\H)\times\H$ can be dealt with by the idea arising in incompressible Navier-Stokes equations, but there are no estimates good enough to control possible oscillations of the density
$\r$. In order to overcome this difficulty, we adopt the method in Lions \cite{p2} and Feireisl \cite{f2} which is based on the celebrated
weak continuity of the effective viscous flux $p-(\lambda+2\mu)\Dv\u$. More specifically, it can be shown that
$$(a\r_n^\gamma-(\lambda+2\mu)\Dv\u_n)b(\r_n)\rightarrow (a\ov{\r^\gamma}-(\lambda+2\mu)\Dv\u)\ov {b(\r)}$$
weakly in  $L^1(\O\times(0,T))$, where $\r_n$ and $\u_n$ are a suitable sequence of approximate
solutions, and the symbol $\ov {F(v)}$ stands for a weak limit of $\{F(v_n)\}_{n=1}^\infty$.

\begin{Remark}
Similarly to \cite{f10}, our approach also works for the general barotropic flow: $$p(\r)=a\r^\gamma+z(\r), \quad\lim_{\r\rightarrow\infty}\f{z(\r)}{\r^\gamma}\in[0,\infty),$$
where $a>0, \gamma>\f{3}{2}$. Moreover, we also can extend our results to the initial-boundary value problem for \eqref{e1} in an exterior domain by using the method of invading domain (cf. Section 7.11 in \cite{i1}) and the following special type of Orlicz spaces $L^p_q(\O)$ (see Appendix A in \cite{p2}):
$$L^p_q(\O)=\left\{f\in L^1_{loc}(\O): \; f|_{\{|f|<\eta\}}\in L^q(\O), \textrm{ and } f|_{\{|f|\geq\eta\}}\in L^p(\O), \textrm{ for some } \eta>0\right\}.$$
\end{Remark}

\bigskip

\section{The Solvability of The Magnetic Field}
In order to prove the existence of solutions to \eqref{e7} by Faedo-Galerkin method, we need to show that
the following system can be uniquely solved in terms of $\u$:
\begin{equation} \label{g3}
\begin{cases}
\H_t-\nabla\times(\u\times\H)=-\nabla\times(\nu\nabla\times\H),\\
\Dv\H=0,\\
\H(x,0)=\H_0, \quad \H|_{\partial\O}=0.
\end{cases}
\end{equation}

In fact, we have the following properties:

\begin{Lemma}\label{a1}
Let $\u\in C([0,T]; C_0^2(\ov{\O};\R^3))$. Then there exists at most one function
$$\H\in L^2([0,T]; H^1_0(\O))\cap L^\infty([0,T]; L^2(\O))$$
which solves \eqref{g3} in the weak sense on  $\O\times(0,T)$, and satisfies boundary and initial conditions in the sense of traces.
\end{Lemma}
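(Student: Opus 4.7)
The plan is to exploit the \emph{linearity} of \eqref{g3} in $\H$. If $\H_1,\H_2$ are two weak solutions in the stated class sharing the same initial and boundary data, then $\H:=\H_1-\H_2$ satisfies the same induction equation with $\H(\cdot,0)=0$, $\H|_{\del\O}=0$, $\Dv\H=0$, while still belonging to $L^2([0,T];H_0^1(\O))\cap L^\infty([0,T];L^2(\O))$. Uniqueness is therefore equivalent to showing $\H\equiv 0$.

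The key step is to use $\H$ itself as a test function and derive an energy identity, mimicking exactly the computation in Section 2 that produced \eqref{f4} but now with a genuinely prescribed velocity $\u$. Using the identities already recorded there, one obtains
\be
\f{1}{2}\f{d}{dt}\int_\O|\H|^2\,\mathrm{d}x + \nu\int_\O|\na\times\H|^2\,\mathrm{d}x
= \int_\O\left(\H^\top\na\u\,\H + \f{1}{2}\na(|\H|^2)\cd\u\right)\mathrm{d}x.
\en
Since $\u\in C_0^2(\ov\O;\R^3)$ vanishes on $\del\O$, integration by parts gives $\int_\O\na(|\H|^2)\cd\u\,\mathrm{d}x = -\int_\O|\H|^2\Dv\u\,\mathrm{d}x$, so the right-hand side is controlled in absolute value by $C\,\|\u(t)\|_{C^1(\ov\O)}\,\|\H(t)\|_{L^2}^2$, with prefactor uniformly bounded on $[0,T]$ thanks to $\u\in C([0,T];C_0^2(\ov\O))$. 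Dropping the nonnegative dissipation on the left and applying Gronwall's inequality to the resulting differential inequality for $\|\H(\cdot,t)\|_{L^2}^2$, starting from zero, forces $\H(\cdot,t)=0$ for every $t\in[0,T]$.

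The main obstacle is the \emph{rigorous} justification of using $\H$ as a test function: a priori $\H_t$ lies only in some negative-order dual space, so the chain-rule identity $\langle\H_t,\H\rangle=\f{1}{2}\f{d}{dt}\|\H\|_{L^2}^2$ cannot be asserted pointwise. I would handle this by a Friedrichs-type mollification in the time variable (in the spirit of DiPerna--Lions), which preserves both the constraint $\Dv\H=0$ and the homogeneous trace on $\del\O$ since the mollifier acts only on $t$. The resulting commutator with the $\u$-dependent term is estimated using the $C^2$-regularity of $\u$ and tends to zero in $L^1$ as the regularization parameter vanishes; passing to the limit recovers the energy identity above and hence the conclusion by Gronwall.
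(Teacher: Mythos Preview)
Your proof is correct and follows essentially the same energy-and-Gronwall approach as the paper: take the difference of two solutions, test the equation against the difference, and conclude $\H\equiv 0$ from zero initial data. The only cosmetic difference is that the paper leaves the right-hand side as $\int_\O(\u\times\H)\cdot(\na\times\H)\,\mathrm{d}x$ and bounds it via Cauchy--Schwarz (absorbing half the dissipation), whereas you expand it further as in Section~2 to estimate it directly by $C\|\u\|_{C^1}\|\H\|_{L^2}^2$; your extra care about justifying $\H$ as a test function is in fact more rigorous than what the paper writes.
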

\begin{proof}
Let $\H_1$, $\H_2$ be two solutions of \eqref{g3} with the same data. Then we have
\begin{equation}\label{a2}
(\H_1-\H_2)_t-\nabla\times(\u\times(\H_1-\H_2))=-\nabla\times(\nu\nabla\times(\H_1-\H_2)).
\end{equation}
Multiplying \eqref{a2} by $\H_1-\H_2$, integrating over $\O$, and using the Cauchy-Schwarz inequality, we obtain
\begin{equation*}
\begin{split}
&\f{1}{2}\f{d}{dt}\int_{\O}|\H_1-\H_2|^2\mathrm{d}x
+\nu\int_{\O}|\nabla\times(\H_1-\H_2)|^2 \mathrm{d}x\\
&=\int_{\O}(\u\times(\H_1-\H_2))\cdot(\nabla\times(\H_1-\H_2))\mathrm{d}x\\
&\leq\f{\nu}{2}\int_{\O}|\nabla\times(\H_1-\H_2)|^2\mathrm{d}x
  +\f{1}{2\nu}\int_{\O}|\u\times(\H_1-\H_2)|^2\mathrm{d}x\\
&\leq\f{\nu}{2}\int_{\O}|\nabla\times(\H_1-\H_2)|^2 \mathrm{d}x
 +C(\nu,\parallel \u\parallel_\infty)\int_{\O}|\H_1-\H_2|^2\mathrm{d}x.
\end{split}
\end{equation*}
This implies
\begin{equation}\label{a3}
\begin{split}
&\f{1}{2}\f{d}{dt}\int_{\O}|\H_1-\H_2|^2\mathrm{d}x
+\f{\nu}{2}\int_{\O}|\nabla\times(\H_1-\H_2)|^2\mathrm{d}x\\
&\leq C(\nu,\parallel \u\parallel_\infty)\int_{\O}|\H_1-\H_2|^2\mathrm{d}x.
\end{split}
\end{equation}
Then,  Lemma \ref{a1} follows directly from Gronwall's inequality and \eqref{a3}.
\end{proof}

\begin{Lemma}\label{a4}
Let $\O\subset\R^3$ be a bounded domain of class $C^{2+\kappa}$, $\kappa>0$. Assume that $\u\in C([0,T]; C_0^2(\ov{\O};\R^3))$
is a given velocity field. Then the solution operator $$\u\mapsto \H[\u]$$ assigns to $\u\in C([0,T]; C_0^2(\ov{\O};\R^3))$
the unique solution $\H$ of \eqref{g3}. Moreover, the solution operator $\u\mapsto \H[\u]$ maps bounded sets in $C([0,T]; C_0^2(\ov{\O};\R^3))$ into
bounded subsets of  $Y:=L^2([0,T]; H^1_0(\O))\cap L^\infty([0,T]; L^2(\O))$, and the mapping
$$\u\in C([0,T]; C_0^2(\ov{\O};\R^3))\mapsto \H \in Y$$ is continuous on any bounded subsets of $C([0,T]; C_0^2(\ov{\O};\R^3))$.
\end{Lemma}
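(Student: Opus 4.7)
The plan is to handle the three assertions of Lemma \ref{a4} in order: existence of the solution operator, a uniform bound on bounded sets, and Lipschitz-type continuity. Uniqueness is already provided by Lemma \ref{a1}, so the operator $\u\mapsto \H[\u]$ is well-defined as soon as we produce at least one solution for every admissible $\u$. For existence, since $\Dv\H=0$ we may rewrite the induction equation in \eqref{g3} as the linear parabolic system
\begin{equation*}
\H_t - \nabla\times(\u\times\H) = \nu\Delta\H, \qquad \Dv\H=0,
\end{equation*}
with the regularity $\u\in C([0,T];C_0^2(\overline{\O};\R^3))$. I would construct $\H$ by a Galerkin scheme based on the eigenfunctions of the vector Laplacian on $\O$ with Dirichlet boundary conditions (or, alternatively, invoke the standard theory of linear parabolic systems with bounded Lipschitz coefficients on a $C^{2+\kappa}$ domain). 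The divergence-free condition is preserved: taking the divergence of the equation and using $\Dv(\nabla\times \cdot)=0$, we see that $(\Dv\H)_t = \nu\Delta\Dv\H$ with $\Dv\H_0=0$, so $\Dv\H\equiv 0$.

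The boundedness step is the energy estimate that already appeared in the derivation of \eqref{f5}. Multiplying \eqref{g3}$_1$ by $\H$, integrating over $\O$, and using $\Dv\H=0$ together with the identity
\begin{equation*}
\int_{\O}(\nabla\times(\u\times\H))\cdot\H\;\mathrm{d}x
= \int_{\O}(\u\times\H)\cdot(\nabla\times\H)\;\mathrm{d}x,
\end{equation*}
followed by Cauchy--Schwarz and Young's inequality with parameter $\nu$, yields
\begin{equation*}
\frac{1}{2}\frac{d}{dt}\|\H\|_{L^2(\O)}^2
+\frac{\nu}{2}\|\nabla\times\H\|_{L^2(\O)}^2
\leq C(\nu)\|\u\|_{L^\infty}^2\|\H\|_{L^2(\O)}^2.
\end{equation*}
Since $\H\in H^1_0(\O)$ with $\Dv\H=0$, the quantity $\|\nabla\times\H\|_{L^2}^2$ controls $\|\nabla\H\|_{L^2}^2$ up to a constant (standard Hodge/div--curl inequality on a $C^{2+\kappa}$ domain). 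Gronwall's inequality then produces a bound on $\|\H\|_{L^\infty(0,T;L^2)}+\|\H\|_{L^2(0,T;H^1_0)}$ depending only on $\|\H_0\|_{L^2}$, $T$, $\nu$, and $\|\u\|_{C([0,T];C_0^2)}$, which is exactly the required mapping of bounded sets into bounded subsets of $Y$.

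For continuity, take $\u_n\to \u$ in $C([0,T];C_0^2(\overline{\O};\R^3))$ inside a fixed bounded set, and set $\H_n=\H[\u_n]$, $\H=\H[\u]$. The difference $\W:=\H_n-\H$ satisfies
\begin{equation*}
\W_t - \nabla\times(\u_n\times\W)
+\nu\nabla\times(\nabla\times\W)
= \nabla\times((\u_n-\u)\times\H),
\end{equation*}
with $\W(\cdot,0)=0$ and $\W|_{\partial\O}=0$. Testing with $\W$ and mimicking the argument of Lemma \ref{a1}, the source term is estimated by
\begin{equation*}
\Big|\int_{\O}((\u_n-\u)\times\H)\cdot(\nabla\times\W)\,\mathrm{d}x\Big|
\leq \frac{\nu}{4}\|\nabla\times\W\|_{L^2}^2
+ C(\nu)\|\u_n-\u\|_{L^\infty}^2\|\H\|_{L^2}^2,
\end{equation*}
while the first nonlinear term is absorbed exactly as in \eqref{a3}. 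Gronwall's inequality then gives
\begin{equation*}
\|\W\|_{L^\infty(0,T;L^2)}^2 + \nu\|\W\|_{L^2(0,T;H^1_0)}^2
\leq C\,\|\u_n-\u\|_{C([0,T];L^\infty)}^2\,\|\H\|_{L^2(0,T;L^2)}^2,
\end{equation*}
where $C$ depends on $\nu$, $T$ and the fixed bound on $\|\u_n\|_{C([0,T];C_0^2)}$. Since the right-hand side tends to zero, we obtain $\H_n\to\H$ in $Y$.

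The main obstacle I anticipate is not the energy estimates, which are routine, but the need to justify the divergence-free condition in the Galerkin approximation and to be careful that the constants in the Gronwall step depend only on $\|\u\|_{C([0,T];C_0^2)}$ (through $\|\u\|_{L^\infty}$) and not on finer norms; this is what makes the continuity statement valid on every bounded subset of $C([0,T];C_0^2(\overline{\O};\R^3))$. Once the div--curl control and Gronwall bookkeeping are in place, all three conclusions follow from the same energy identity applied to $\H[\u]$ and to differences $\H[\u_n]-\H[\u]$.
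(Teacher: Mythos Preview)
Your proposal is correct and follows essentially the same route as the paper: rewrite \eqref{g3} as the linear parabolic system $\H_t=\nu\Delta\H+\nabla\times(\u\times\H)$, invoke Galerkin/standard parabolic theory for existence, and run the $L^2$ energy estimate plus Gronwall for both the boundedness and the continuity claims. The only cosmetic differences are that you split $\u_n\times\H_n-\u\times\H=\u_n\times\W+(\u_n-\u)\times\H$ whereas the paper uses $(\u_n-\u)\times\H_n+\u\times\W$, and you apply Gronwall directly to obtain a quantitative bound while the paper passes to $\limsup_n$ before invoking Gronwall; both are equivalent. Your explicit remarks on the div--curl control of $\|\nabla\H\|_{L^2}$ and on the preservation of $\Dv\H=0$ are points the paper leaves implicit.
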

\begin{proof}
The uniqueness of the solution to \eqref{g3} is a consequence of Lemma \ref{a1}.
Noticing that
$$\nabla\times(\nabla\times\H)=\nabla(\text{div} \H)-\Delta H,$$
then \eqref{g3} becomes
\begin{equation} \label{g3b}
\begin{cases}
\H_t-\nabla\times(\u\times\H)=\nu \Delta \H,\\
\Dv\H=0,\\
\H(x,0)=\H_0, \quad \H|_{\partial\O}=0,
\end{cases}
\end{equation}
which is a linear parabolic-type problem in $\H$, so the existence of solution can be
obtained by the standard Faedo-Galerkin methods. And from \eqref{a3}, we can conclude that the solution operator $\u\mapsto \H[\u]$
maps bounded sets in $C([0,T]; C_0^2(\ov{\O};\R^3))$ into
bounded subsets of the set $Y=L^2([0,T]; H^1_0(\O))\cap L^\infty([0,T]; L^2(\O))$.

Our next step is to show the solution operator is continuous from any bounded subset of $C([0,T]; C_0^2(\ov{\O};\R^3)$ to
$L^2([0,T]; H^1_0(\O))\cap L^\infty([0,T]; L^2(\O))$.
To this end, let $\{\u_n\}_{n=1}^\infty$ be a bounded sequence in $C([0,T]; C_0^2(\ov{\O};\R^3)$, i.e., $\{\u_n\}_{n=1}^\infty\subset B(0,K)\subset C([0,T]; C_0^2(\ov{\O};\R^3)$ for some $K>0$, and
$$\u_n\rightarrow\u \textrm{ in }C([0,T]; C_0^2(\ov{\O};\R^3)), \text{ as } n\to\infty.$$
Then, we have, denoting $\H[\u]$ by $\H_{\u}$, and $\H[\u_n]$ by $\H_n$,
\begin{equation*}
\begin{split}
&\f{1}{2}\f{d}{dt}\int_{\O}|\H_n-\H_{\u}|^2\mathrm{d}x
+\nu\int_{\O}|\nabla\times(\H_n-\H_{\u})|^2\mathrm{d}x\\
&=\int_{\O}(\u_n\times\H_n-\u\times\H_{\u}))\cdot(\nabla\times(\H_n-\H_{\u}))\mathrm{d}x\\
&\leq\int_{\O}((\u_n-\u)\times\H_n+\u\times(\H_n-\H_{\u}))\cdot(\nabla\times(\H_n-\H_{\u}))\mathrm{d}x\\
&\leq\parallel\u_n-\u\|_\infty\|\H_n\|^2_Y+K\|\H_n-\H_{\u}\|_{L^2(\O)}
\|\H_n-\H_{\u}\|_{H^1_0(\O)}\\
&\leq C^2\|\u_n-\u\|_\infty+c\|\H_n-\H_{\u}\|^2_{L^2(\O)}
+\f{\nu}{2}\|\H_n-\H_{\u}\|^2_{H^1_0(\O)},
\end{split}
\end{equation*}
where, we used the fact that  $\H_n[\u_n]$ is bounded in Y, says, by
$C=C(K,\nu)$. This implies that
\begin{equation}\label{a5}
\begin{split}
&\f{1}{2}\f{d}{dt}\int_{\O}|\H_n-\H_{\u}|^2\mathrm{d}x
+\f{\nu}{2}\int_{\O}|\nabla\times(\H_n-\H_{\u})|^2\mathrm{d}x\\
&\leq C^2\|\u_n-\u\|_\infty+c\|\H_n-\H_{\u}\|^2_{L^2(\O)}
\end{split}
\end{equation}
Integrating \eqref{a5} over $t\in(0,T)$, and then taking the upper limit over $n$ on the both sides, we get, noting that
$\u_n\rightarrow\u \textrm{ in }C([0,T]; C_0^2(\ov{\O};R^3))$,
\begin{equation}\label{a6}
\begin{split}
&\f{1}{2}\limsup_{n}\int_{\O}|\H_n-\H_{\u}|^2\mathrm{d}x+\f{\nu}{2}\limsup_{n}\int_0^t\int_{\O}|\nabla\times(\H_n-\H_{\u})|^2
\mathrm{d}x\mathrm{d}s\\
&\leq c\limsup_{n}\int_0^t\|\H_n-\H_{\u}\|^2_{L^2(\O)}\mathrm{d}s
\leq c\int_0^t\limsup_{n}\|\H_n-\H_{\u}\|^2_{L^2(\O)}\mathrm{d}s,
\end{split}
\end{equation}
thus, from \eqref{a6}, using Gronwall's inequality and the same initial value for
$\H_n$ and $\H_{\u}$, we get
$$\limsup_{n}\int_{\O}|\H_n-\H_{\u}|^2(t)\mathrm{d}x=0.$$
This yields, from \eqref{a6} again,
$$\limsup_{n}\int_0^t\int_{\O}|\nabla\times(\H_n-\H_{\u})|^2
\mathrm{d}x\mathrm{d}s=0.$$
Therefore, we obtain
$$\H_n\rightarrow\H_{\u} \textrm{ in } Y.$$
This completes the proof of the continuity of the solution operator.
\end{proof}

\bigskip

\section{The Faedo-Galerkin Approximation Scheme}

In this section, we establish the existence of solutions to \eqref{e7} following
the approach  in \cite{f1} with the extra efforts to overcome the difficulty arising from  the magnetic field.
Let $$X_n=\mathrm{span}\{\eta_j\}_{j=1}^n$$ be the finite-dimensional
space endowed with the $L^2$ Hilbert space structure, where the functions
$\eta_j\in \mathcal{D}(\O;\R^3)$, $j=1,2,...$,  form a dense subset
in, says, $C_0^2(\ov\O;\R^3)$. Through this paper, we use $\mathcal{D}$ to denote
$C_0^\infty$, and $\mathcal{D}'$ for the sense of distributions.
The approximate velocity field $\u_n\in C([0,T];X_n)$ satisfies
the system of integral equations:
\begin{equation}\label{g1}
\begin{split}
&\int_{\O}\r\u_n(x,t)\cdot\eta\,\mathrm{d}x-\int_{\O} \m_{0,\dl}\cdot\eta\,\mathrm{d}x\\
&=\int_0^{t}\int_{\O}\Big(\mu\Delta\u_n-\Dv(\r\u_n\otimes\u_n)+\na\left((\lambda+\mu)\Dv\u_n-a\r^\gamma
-\delta\r^\beta\right)\\
&\qquad\qquad\quad-\varepsilon\na\r\cdot\na\u_n+(\na \times \H)\times \H\Big)\cdot\eta\,\mathrm{d}x\mathrm{d}\tau,
\end{split}
\end{equation}
for any $t\in[0,T]$ and any $\eta\in X_n$, where $\varepsilon$,
$\delta$, and $\beta$ are fixed positive parameters.
The density $\r_n=\r[\u_n]$ is determined uniquely as the
solution of the Neumann initial-boundary value problem (cf. Lemma
2.2 in \cite{f1}):
\begin{equation}\label{g2}
\begin{cases}
\rho_t +\Dv(\r\u_n)=\varepsilon\Delta\r, \\
\na\r\cdot {\bf{n}}|_{\partial\O}=0, \\
\r(x,0)=\r_{0,\delta}(x);
\end{cases}
\end{equation}
and the magnetic field $\H_n=\H[\u_n]$  as a solution to the system \eqref{g3}.
The initial data $\r_{0,\delta}$ is a smooth function in
$C^3(\ov{\O})$ satisfying the homogeneous Neumann boundary
conditions $\na\r_{0,\delta}\cdot {\bf{n}}|_{\partial\O}=0$, and
\begin{gather}
0<\delta\leq\r_{0,\delta}\leq\dl^{-\f{1}{2\beta}}, \label{e8} \\
\r_{0,\delta}\rightarrow \r_0 \textrm{ in } L^\gamma(\O),\quad
|\{\r_{0,\delta}<\r_0\}|\rightarrow 0 \textrm{ for } \dl\rightarrow
0; \label{e9}
\end{gather}
moreover, we set
\begin{equation}\label{e10}
\m_{0,\dl}(x)=\begin{cases} \m_0(x),& \textrm{ if }\; \r_{0,\delta}\geq\r_0(x),\\
0,& \textrm{ if } \; \r_{0,\delta}<\r_0(x).
\end{cases}
\end{equation}

Due to Lemma \ref{a1} and Lemma \ref{a4}, the problem \eqref{g1}, \eqref{g2}, and \eqref{g3} can be solved locally
in time by means of the Schauder fixed-point technique; see for example Section 7.2 of Feireisl \cite{f2}. As  in \cite{f1, f2}
the role of the "artificial pressure" term $\delta\r^\beta$ in \eqref{g1} is to provide additional estimates
on the approximate densities in order to
facilitate the limit passage $\varepsilon\rightarrow 0$ (cf. Chapter 7 in \cite{f2}). To this end, one has to take
$\beta$ large enough, says, $\beta>8$, and to re-parametrize the initial distribution of the approximate densities
so that
\begin{equation}\label{g7}
\delta\int_{\O}\r_{0,\dl}^\beta\,\mathrm{d}x\rightarrow 0 \textrm{ as } \dl\rightarrow 0.
\end{equation}

To obtain uniform bounds on $\u_n$, we derive an energy equality similar to \eqref{f5} as follows. Taking $\eta(x)=\u_n(x,t)$ with fixed $t$ in \eqref{g1} and repeating the procedure for  \textit{a priori} estimates in Section 2,
we deduce a ``kinetic energy equality":
\begin{equation}\label{g4}
\begin{split}
&\f{d}{dt}\int_{\O}\left(\f{1}{2}\r_n\u_n^2+\f{a}{\gamma-1}\r_n^\gamma
+\f{\delta}{\beta-1}\r_n^\beta+
\f{1}{2}|\H_n|^2\right)\,\mathrm{d}x\\
&+\int_{\O}\left(\mu|D\u_n|^2+(\lambda+\mu)(\Dv\u_n)^2
+\nu(\nabla\times\H_n)\cdot(\nabla\times\H_n)\right)\,\mathrm{d}x\\
&+\varepsilon\int_{\O}\left(a\gamma\r_n^{\gamma-2}+\delta\beta\r_n^{\beta-2}\right)
|\nabla\r_n|^2\,\mathrm{d}x = 0.
\end{split}
\end{equation}
The uniform estimates obtained from \eqref{g4} furnish the possibility to repeat the above fixed point argument
to extend the local solution $\u_n$ to the whole time interval $[0,T]$. Then, by the solvability of equations
\eqref{g2} and \eqref{g3}, we obtain the functions $\{\r_n, \H_n\}$ on the whole time interval $[0,T]$.

The next step in the proof of Theorem \ref{mt} consists of passing to the limit as $n\rightarrow \infty$ in
the sequence of approximate solutions $\{\r_n, \u_n, \H_n\}$ obtained above.
We first observe that the terms related to $\u_n$ and $\r_n$ can be treated similarly to
Section 7.3.6 in \cite{f2}, due to the energy equality \eqref{g4}. It remains to show the convergence of the sequence of solutions $\{\H_n\}_{n=1}^\infty$.
From \eqref{g4}, we conclude
$$\H_n \textrm{ is bounded in } L^\infty([0,T]; L^2(\O))\cap L^2([0,T]; H_0^1(\O)).$$
This implies that, by the compactness of $H^1(\O)\hookrightarrow L^2(\O)$ and selecting a subsequence if necessary,
there exists a function $\H\in L^\infty([0,T]; L^2(\O))\cap L^2([0,T]; H^1(\O))$ with $\Dv\H=0$ such that
$\H_n(\cdot,t)\rightarrow \H(\cdot,t) \textrm{ in } L^2(\O)$ for a.e. $t\in [0,T]$. Thus,
$$(\na \times \H_n)\times \H_n\rightarrow(\na \times \H)\times \H \textrm{ in } \mathcal{D}'(\O\times(0,T)).$$
Similarly, we have
$$\nabla\times(\u_n\times\H_n)\rightarrow\nabla\times(\u\times\H)\textrm{ in } \mathcal{D}'(\O\times(0,T)),$$
where
$$\u_n\rightarrow \u \textrm{ weakly in } L^2([0,T]; H_0^1(\O)).$$

Therefore, \eqref{g1} and \eqref{g3} holds at least in the sense of distribution. Moreover, by the uniform estimates on
$\u$, $\H$ and the third equation in \eqref{e1}, we know that the map
$$t\rightarrow\int_{\O}\H_n(x,t)\varphi(x)\,\mathrm{d}x \textrm{ for any } \varphi\in \mathcal{D}(\O),$$
is equi-continuous on [0,T].
By the Ascoli-Arzela Theorem, we know that
$$t\rightarrow\int_{\O}\H(x,t)\varphi(x)\,\mathrm{d}x$$
is continuous for any $\varphi\in \mathcal{D}(\O)$. Thus, $\H$ satisfy the initial condition in \eqref{g3} in the sense of distribution.

Now, we are ready to summarize an existence result for problem \eqref{g1}, \eqref{g2}, and \eqref{g3} (cf. Proposition 7.5 in \cite{f2}).

\begin{Proposition}\label{g5}
Assume that $\Omega \subset \R^3$ is a bounded domain of the class
$C^{2+\kappa}$, $\kappa >0$. Let $\varepsilon>0$, $\delta>0$,
and $\beta>max\{4, \gamma\}$ be fixed. Then for any given $T>0$,
problem \eqref{g1}, \eqref{g2}, and \eqref{g3} admits at least one solution $\r, \u,
\H$ in the following sense:
\begin{itemize}
\item[(1)] The density $\r$ is a non-negative function such that
$$\r\in L^r([0,T]; W^{2,r}(\O)), \quad\partial_t\r\in L^r((0,T)\times\O),$$
for some $r>1$,
the velocity $\u$ belongs to the class $L^2([0,T]; H_0^1(\O))$,
equation \eqref{g2} holds a.e on $\O\times(0,T)$, and the boundary
condition as well as the initial data condition on $\r$ are
satisfied in the sense of traces. Moreover, the total mass is
conserved, especially,
$$\int_{\O}\r(x,t)\,\mathrm{d}x=\int_{\O}\r_{0,\delta}(x)\,\mathrm{d}x,$$
for all $ t\in [0,T]$;
and the following estimates hold:
$$\delta\int_0^T\!\!\!\!\int_{\O}\r^{\beta+1}\,\mathrm{d}x\mathrm{d}t\leq C(\varepsilon),$$
$$\varepsilon\int_0^T\!\!\!\!\int_{\O}|\nabla\r|^2\,\mathrm{d}x\mathrm{d}t\leq C \textrm{ with C independent of } \varepsilon.$$
\item[(2)] All quantities appearing in equation \eqref{g1} are locally integrable, and the equation is satisfied in
$\mathcal{D}'(\O\times (0,T))$. Moreover, we have
$$\r\u\in C([0,T]; L^{\f{2\gamma}{\gamma+1}}_{weak}(\O)),$$
and $\r\u$ satisfies the initial condition.
\item[(3)] All terms in \eqref{g3} are locally integrable on $\O\times (0,T)$. The magnetic field $\H$ satisfies equation
\eqref{g3} and the initial data in the sense of distribution. $\Dv\H=0$ also holds in the sense of distribution.
\item[(4)] The energy inequality
\begin{equation}\label{g6}
\begin{split}
&\int_{\O}\left(\f{1}{2}\r\u^2+\f{a}{\gamma-1}\r^\gamma
  +\f{\delta}{\beta-1}\r^\beta+\f{1}{2}|\H|^2\right)(x,t)\,\mathrm{d}x\\
&+\int_{\O}\left(\mu|D\u|^2+\lambda(\Dv\u)^2+\nu|\nabla\times\H|^2\right)(x,t)\,\mathrm{d}x\\
&+\varepsilon\int_{\O}\left(a\gamma\r^{\gamma-2}+\delta\beta\r^{\beta-2}\right)
  |\nabla\r|^2(x,t)\,\mathrm{d}x\\
&\leq\int_{\O}\left(\f{1}{2}\f{|\m_0|^2}{\r_0}
+\f{a}{\gamma-1}\r_0^\gamma+\f{\delta}{\beta-1}\r_0^\beta+\f{1}{2}|\H_0|^2\right)
\,\mathrm{d}x,
\end{split}
\end{equation}
holds  a.e $t\in[0,T]$.
\end{itemize}
\end{Proposition}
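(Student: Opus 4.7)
The plan is a two-level approximation: first solve the Faedo--Galerkin problem \eqref{g1}--\eqref{g3} on the finite-dimensional space $X_n$ by the Schauder fixed-point theorem, then pass $n\to\infty$ by compactness.

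At level $n$, given $\u\in C([0,T_n];X_n)$, the Neumann parabolic problem \eqref{g2} determines a unique $\r=\r[\u]\in L^r([0,T];W^{2,r}(\O))$ with $\partial_t\r\in L^r$ (standard parabolic theory, as in Lemma 2.2 of \cite{f1}), and the maximum principle combined with \eqref{e8} guarantees $\r>0$ throughout. Lemma \ref{a4} simultaneously produces a unique $\H=\H[\u]\in Y$ depending continuously on $\u$. Substituting $\r[\u]$ and $\H[\u]$ into the right-hand side of \eqref{g1} reduces \eqref{g1} to a system of ODEs for the coordinates of $\u_n$ in $X_n$; positivity of $\r$ ensures that the mass matrix $\int_\O \r(t)\eta_j\cdot\eta_k\,dx$ is invertible, so the ODE is non-degenerate. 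Schauder's theorem on a closed ball of $C([0,T_n];X_n)$ for small $T_n>0$ then produces a local solution $\u_n$, with corresponding $\r_n$ and $\H_n$.

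Next I would test \eqref{g1} with $\u_n$, multiply \eqref{g2} by an appropriate combination of powers of $\r_n$ (to generate the $\r^\gamma$, $\r^\beta$ energy and the $\varepsilon$ dissipation terms), and multiply \eqref{g3} by $\H_n$, using the cancellation
$$\int_\O\bigl((\na\times\H_n)\times\H_n\bigr)\cdot\u_n\,\mathrm{d}x+\int_\O\bigl(\na\times(\u_n\times\H_n)\bigr)\cdot\H_n\,\mathrm{d}x=0$$
established in Section 2 to eliminate the hydro-magnetic cross terms. Adding yields the kinetic energy equality \eqref{g4}. Since all norms on $X_n$ are equivalent and $\r_n>0$, \eqref{g4} bounds $\u_n$ uniformly in $X_n$ on $[0,T_n]$, which rules out blow-up and lets the local solution be extended to $[0,T]$. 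Mass conservation and the $W^{2,r}$-regularity in part (1) of the proposition follow from integrating \eqref{g2} over $\O$ and from standard $L^r$-parabolic theory, while parts (2)--(3) come directly from the construction together with Lemma \ref{a4}.

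To pass $n\to\infty$, \eqref{g4} yields $n$-independent bounds: $\r_n$ in $L^\infty_tL^\beta_x$, $\u_n$ in $L^2_tH^1_x$, $\H_n$ in $L^\infty_tL^2_x\cap L^2_tH^1_x$, together with the $\varepsilon$-parabolic estimates on $\r_n$. The fluid variables are handled as in Section 7.3.6 of \cite{f2}: the parabolic regularization of \eqref{g2} gives strong compactness of $\r_n$, and weak convergence of $\u_n$ in $L^2_tH^1_x$ lets one identify the limits in the pressure, in $\Dv(\r_n\u_n\otimes\u_n)$, and in $\varepsilon\na\r_n\cdot\na\u_n$. The energy inequality \eqref{g6} follows from \eqref{g4} by weak lower semicontinuity. \textbf{The main obstacle} is passing to the limit in the magnetic nonlinearities $(\na\times\H_n)\times\H_n$ and $\na\times(\u_n\times\H_n)$, both of which are genuinely weak--weak products. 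Testing \eqref{g3} against $\varphi\in\mathcal{D}(\O)$ yields equi-continuity in $t$ of $\int_\O\H_n(\cdot,t)\cdot\varphi\,\mathrm{d}x$; combined with the $H^1_x$ bound and the Aubin--Lions lemma, this upgrades the weak convergence of $\H_n$ to strong convergence in $L^2_tL^2_x$, which is precisely what is needed to identify both nonlinear limits in $\mathcal{D}'(\O\times(0,T))$.
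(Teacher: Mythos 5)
Your proposal is correct and follows essentially the same route as the paper: Schauder fixed point on $X_n$ using the solvability of \eqref{g2} (Lemma 2.2 of \cite{f1}) and of \eqref{g3} (Lemmas \ref{a1}--\ref{a4}), the kinetic energy equality \eqref{g4} to globalize and to obtain $n$-independent bounds, the fluid terms handled as in Section 7.3.6 of \cite{f2}, and the magnetic nonlinearities identified via strong $L^2$ compactness of $\H_n$ obtained from the $L^2_tH^1_x$ bound together with equicontinuity in time. The paper presents exactly this argument (only more tersely, deferring the fixed-point details to \cite{f2}), so no further comment is needed.
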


In the next section, we will complete the proof of Theorem \ref{mt} by taking vanishing artificial viscosity and vanishing artificial pressure.

\bigskip

\section{The Convergence of the Approximate Solution Sequence}

Now we have the approximate solutions $\{\r_{\varepsilon, \delta}, \u_{\varepsilon, \delta}, \H_{\varepsilon, \delta}\}$ obtained in Section 4. To prove  Theorem \ref{mt} we need to
take the limits as the artificial viscosity coefficient $\varepsilon\to 0$ and as the artificial pressure coefficient $\delta\to 0$.

First, following Chapter 7 in \cite{f2} (see also \cite{f1}), we can pass to the limit as $\varepsilon\to 0$ to obtain the following result:

\begin{Proposition}\label{m1}
\textit{Assume $\Omega \subset \R^3$ is a bounded domain of  class
$C^{2+\kappa}$, $\kappa >0$. Let $\delta>0$,
and $$\beta>max\{4, \f{6\gamma}{2\gamma-3}\}$$ be fixed. Then, for given initial data $\r_0, \m_0$ as in \eqref{e8}-\eqref{e10} and $\H_0$ as in \eqref{e4}, there
exists a finite energy weak solution $\r, \u, \H$ of the problem:
\begin{equation} \label{m2}
\begin{cases}
\rho_t +\Dv(\r\u)=0, \\
(\r\u)_t+\Dv\left(\r\u\otimes\u\right)+\nabla (a\r^\gamma+\delta\r^\beta)=(\na \times \H)\times \H+\mu \Delta \u+(\lambda+\mu)\u, \\
\H_t-\nabla\times(\u\times\H)=-\nabla\times(\nu\nabla\times\H),\quad
\Dv\H=0,
\end{cases}
\end{equation}
satisfying the initial boundary conditions \eqref{e8}-\eqref{g7} and \eqref{e4}.
Moreover, $\r\in L^{\beta+1}(\O\times (0,T))$ and the continuity equation in \eqref{m2} holds in the sense of renormalized solutions.
Furthermore, $\r, \u, \H$ satisfy the following uniform estimates:
\begin{equation}\label{ml3}
\sup_{t\in[0,T]}\|\r(t)\|^\gamma_{L^\gamma(\O)}\leq cE_\delta[\r_0, \m_0, \H_0],
\end{equation}
\begin{equation}\label{m4}
\dl\sup_{t\in[0,T]}\|\r(t)\|^\beta_{L^\beta(\O)}\leq cE_\delta[\r_0, \m_0, \H_0],
\end{equation}
\begin{equation}\label{m5}
\sup_{t\in[0,T]}\|\sqrt{\r(t)}\u(t)\|^2_{L^2(\O)}\leq cE_\delta[\r_0, \m_0, \H_0],
\end{equation}
\begin{equation}\label{m6}
\|\u\|_{L^2([0,T];H^1_0(\O))}\leq cE_\delta[\r_0, \m_0, \H_0],
\end{equation}
\begin{equation}\label{m7}
\sup_{t\in[0,T]}\|\H(t)\|^2_{L^2(\O)}\leq cE_\delta[\r_0, \m_0, \H_0],
\end{equation}
\begin{equation}\label{m8}
\|\H\|_{L^2([0,T];H^1_0(\O))}\leq cE_\delta[\r_0, \m_0, \H_0],
\end{equation}
where the constant c is independent of $\delta>0$ and
$$E_\delta[\r_0, \m_0, \H_0]=\int_{\O}\left(\f{1}{2}\f{|\m_{0,\dl}|^2}{\r_{0,\dl}}+\f{a}{\gamma-1}\r_{0,\dl}^\gamma+
\f{\delta}{\beta-1}\r_{0,\dl}^\beta+\f{1}{2}|\H_0|^2\right)\,\mathrm{d}x.$$}
\end{Proposition}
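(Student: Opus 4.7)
The plan is to start from the sequence of approximate solutions $(\r_\varepsilon,\u_\varepsilon,\H_\varepsilon)$ supplied by Proposition \ref{g5} and send $\varepsilon\to 0$ with $\delta>0$ fixed. Since the energy inequality \eqref{g6} is $\varepsilon$-uniform, extracting subsequences yields the weak limits $\r_\varepsilon\rightharpoonup\r$ in $L^\infty([0,T];L^\gamma\cap L^\beta(\O))$, $\u_\varepsilon\rightharpoonup\u$ in $L^2([0,T];H^1_0(\O))$, and $\H_\varepsilon\rightharpoonup\H$ in $L^2([0,T];H^1_0(\O))\cap L^\infty([0,T];L^2(\O))$. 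Lower semicontinuity then delivers the bounds \eqref{ml3}--\eqref{m8} for the limit triple, so the analytic work is to identify the weak limits in the nonlinearities and to establish the renormalized continuity equation.

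My first technical step would be to upgrade the integrability of $\r_\varepsilon$. Following the Lions--Feireisl device, I would test the momentum equation in \eqref{e7} with the Bogovskii-type inverse divergence applied to $\r_\varepsilon^\theta-\frac{1}{|\O|}\int_\O\r_\varepsilon^\theta\,\mathrm{d}x$ for a small $\theta>0$, which converts the pressure terms into an $L^1_{t,x}$ bound on $a\r_\varepsilon^{\gamma+\theta}+\dl\r_\varepsilon^{\beta+\theta}$. The threshold $\beta>\max\{4,\tfrac{6\gamma}{2\gamma-3}\}$ is calibrated so that the nonlinear convection and viscous contributions can be absorbed and so that a uniform bound on $\r_\varepsilon$ in $L^{\beta+1}(\O\times(0,T))$ survives; the magnetic contribution $(\nabla\times\H_\varepsilon)\times\H_\varepsilon$ tested against the Bogovskii field is controlled by $\|\H_\varepsilon\|_{L^2_tH^1_x}\|\H_\varepsilon\|_{L^\infty_tL^2_x}$ via $H^1\hookrightarrow L^6$, which fits into the same scheme without loss.

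Next, I would pass to the limit in the three equations. The artificial diffusion $\varepsilon\Delta\r_\varepsilon$ vanishes in $\mathcal{D}'$ thanks to the $L^2$ bound on $\sqrt{\varepsilon}\nabla\r_\varepsilon$ from \eqref{g6}. For the magnetic field, combining the uniform bound $\H_\varepsilon\in L^2_tH^1_x$ with the time-derivative estimate obtained directly from the induction equation gives, via Aubin--Lions, strong convergence $\H_\varepsilon\to\H$ in $L^2([0,T];L^2(\O))$; consequently both $(\nabla\times\H_\varepsilon)\times\H_\varepsilon$ and $\nabla\times(\u_\varepsilon\times\H_\varepsilon)$ converge in $\mathcal{D}'(\O\times(0,T))$, because in each product one factor converges strongly in $L^2$ while the other only weakly. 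The convection term $\r_\varepsilon\u_\varepsilon\otimes\u_\varepsilon$ is treated classically: the compact embedding $L^{2\gamma/(\gamma+1)}(\O)\hookrightarrow H^{-1}(\O)$ combined with equi-continuity of $t\mapsto\r_\varepsilon\u_\varepsilon$ in $L^{2\gamma/(\gamma+1)}_{\rm weak}(\O)$ passes $\r_\varepsilon\u_\varepsilon\to\r\u$ strongly in $L^2([0,T];H^{-1}(\O))$, and then $\r_\varepsilon\u_\varepsilon\otimes\u_\varepsilon\to\r\u\otimes\u$ in $\mathcal{D}'$.

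The crux, and the step I expect to be the hardest, is the identification of the weak limit of the pressure, which requires strong convergence of $\r_\varepsilon$ in $L^1$. The plan is to reproduce the effective viscous flux argument of Lions and Feireisl: testing the momentum equation against $\nabla\Delta^{-1}[\phi\,\r_\varepsilon]$ with a smooth cut-off $\phi$, commutator estimates in the spirit of Coifman--Meyer yield the identity
\[
\overline{(a\r^\gamma+\dl\r^\beta)\r}-(\lambda+2\mu)\overline{\r\,\Dv\u}=\overline{a\r^\gamma+\dl\r^\beta}\cdot\r-(\lambda+2\mu)\r\,\Dv\u.
\]
The magnetic term contributes $\int(\nabla\times\H_\varepsilon)\times\H_\varepsilon\cdot\nabla\Delta^{-1}[\phi\,\r_\varepsilon]$, which causes no extra trouble because $\H_\varepsilon\to\H$ strongly in $L^2_{t,x}$ while $\phi\,\r_\varepsilon\to\phi\,\r$ weakly-$*$ in $L^\infty_tL^\beta_x$. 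Combining this identity with the renormalized continuity equation (already available at the $\varepsilon$-level from Proposition \ref{g5}) and monitoring the evolution of the defect $\overline{\r\ln\r}-\r\ln\r$ produces $\overline{\r\ln\r}=\r\ln\r$, hence $\r_\varepsilon\to\r$ a.e.; Vitali's theorem together with the $\varepsilon$-uniform $L^{\beta+1}$ bound then promotes this to strong convergence in $L^1(\O\times(0,T))$ and lets one identify the limit pressure and verify that \eqref{m2} holds as stated, with the limit density again satisfying the renormalized continuity equation.
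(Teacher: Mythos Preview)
Your proposal is correct and follows essentially the same approach as the paper, which in fact omits the proof of this proposition entirely and simply refers the reader to Chapter~7 of Feireisl \cite{f2} and to \cite{f1} for the vanishing viscosity limit $\varepsilon\to 0$. Your sketch --- the $\varepsilon$-uniform $L^{\beta+1}$ bound via the Bogovskii operator, Aubin--Lions compactness for $\H_\varepsilon$, the effective viscous flux identity, and the $\overline{\r\ln\r}$ defect argument --- is precisely the Feireisl machinery the paper invokes, with the magnetic terms handled just as you describe.
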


Observing that the conditions \eqref{e9}-\eqref{g7} imply that the term $E_\delta[\r_0, \m_0, \H_0]$ appearing
in \eqref{ml3}-\eqref{m8} can be indeed majored to
be a constant $E[\r_0, \m_0, \H_0]$ which is also independent of the choice of $\delta$. This gives us a list of
\textit{a priori} estimates on $\r, \u, \H$ which are independent of $\dl$.
We omit the proof of Proposition \ref{m1}, and
concentrate our attention on passing to the limit in the artificial
pressure term  to establish the weak sequential stability property for the approximate solutions obtained in  Proposition \ref{m1} as $\delta\rightarrow 0$.

\subsection{On the integrability of the density}
We first derive an estimate of the density $\r_\delta$
uniform in  $\dl>0$ to make possible passing to the limit
in the term $\dl\r_\dl^\beta$ as $\dl
\rightarrow 0$. The technique is similar to that in \cite{f1}.

Noting that the function $b(\r)=\textrm{ln}(1+\r)$ satisfies the
condition \eqref{e12}, and $\r_\dl, \u_\dl, \H_\dl$ are the solution to \eqref{m2} in
the sense of renormalized solutions, we have
\begin{equation}\label{x1}
\left(\ln(1+\r_\dl)\right)_t+\Dv\left(\textrm{ln}\left(1+\r_\dl\right)
\u_\dl\right)+\left(\f{\r_\dl}{1+\r_\dl}-\textrm{ln}\left(1+\r_\dl\right)\right)\Dv
\u_\dl=0.
\end{equation}

Now we introduce an auxiliary operator
$${B}: \; \left\{f\in L^p(\O):\; \int_{\O}f=0\right\}\mapsto
[W^{1,p}_0(\O)]^3$$ which is a bounded linear operator, i.e.,
\begin{equation}\label{m9}
\| {B}[f]\|_{W^{1,p}_0(\O)}\leq c(p)\|
f\|_{L^p(\O)} \textrm{ for any } 1<p<\infty;
\end{equation}
and the function $W=B[f]\in\R^3$ solves the problem
\begin{equation}\label{m10}
\Dv \mathrm{W}=f\textrm{ in }\O, \quad \mathrm{W}|_{\partial\O}=0.
\end{equation}
Moreover, if f can be written in the form $f=\Dv g$ for some $g\in L^r$, $g\cdot {\bf{n}}|_{\partial\O}=0$,
then
\begin{equation}\label{m16}
\|{B}[f]\|_{L^r(\O)}\leq c(r)\| g\|_{L^r(\O)}
\end{equation}
for arbitrary $1<r<\infty$.

Define the functions:
$$\varphi_i=\psi(t)B_i\left[\textrm{ln}(1+\r_\dl)-\dashint_{\O}\textrm{ln}(1+\r_\dl)\,\mathrm{d}x\right],
\; \psi\in \mathcal{D}(0,T), \; i=1,2,3,$$
where $\dashint_{\O}\textrm{ln}(1+\r_\dl)\,\mathrm{d}x=\frac1{|\O|}\int_\O\ln(1+\r_\dl) dx$ is the average of $ \ln(1+\r_\dl)$ over $\O$.
By virtue of \eqref{ml3} and \eqref{x1}, we get
$$
\textrm{ln}(1+\r_\dl)\in C([0,T]; L^p(\O)) \textrm{
for any finite } p>1.
$$
Therefore, from \eqref{m9}, we have
$$
\varphi_i \in C([0,T]; W^{1,p}_0(\O)) \textrm{ for any finite }
p>1.
$$
In particular, $\varphi_i \in C(\O\times[0,T])$ by the  Sobolev embedding theorem.
Consequently,  $\varphi_i$ can be used as test
functions for the momentum balance equation in \eqref{m2}. After a little bit lengthy but
straightforward computation, we obtain:
\begin{equation}\label{m13}
\int_0^T\!\!\!\!\int_{\O}\psi(\dl
\r_\dl^\beta+a\r_\dl^\gamma)\textrm{ln}(1+\r_\dl)\,\mathrm{d}x\mathrm{d}t=\sum_{j=1}^7I_j,
\end{equation}
where
\begin{equation*}
\begin{split}
I_1=&\int_0^T\psi\int_{\O}(\dl
\r_\dl^\beta+a\r_\dl^\gamma)\,\mathrm{d}x\dashint_{\O}\textrm{ln}(1+\r_\dl)\,\mathrm{d}x\mathrm{d}t,\\
I_2=&(\lambda+\mu)\int_0^T\!\!\!\!\int_{\O}\psi\textrm{ln}(1+\r_\dl)\Dv\u_\dl\,\mathrm{d}x\mathrm{d}t,\\
I_3=&-\int_0^T\!\!\!\!\int_{\O}\psi_t\r_\dl u^i_\dl
B_i\left[\textrm{ln}(1+\r_\dl)-\dashint_{\O}\textrm{ln}(1+\r_\dl)\,\mathrm{d}x\right]\,\mathrm{d}x\mathrm{d}t,\\
I_4=& \int_0^T\!\!\!\!\int_{\O}\psi(\mu\partial_{x_j}u_\dl^i-\r_\dl u_\dl^i
u_\dl^j)\partial_{x_j}B_i\left[\textrm{ln}(1+\r_\dl)-\dashint_{\O}\textrm{ln}(1+\r_\dl)\,\mathrm{d}x\right]
\,\mathrm{d}x\mathrm{d}t,\\
I_5=&\int_0^T\!\!\!\!\int_{\O}\psi \r_\dl u_\dl^i
B_i\left[\left(\textrm{ln}(1+\r_\dl)-\f{\r_\dl}{1+\r_\dl}\right)\Dv\u_\dl \right. \\
&\qquad\qquad\qquad\qquad \left. -\dashint_{\O}\left(\textrm{ln}(1+\r_\dl)-\f{\r_\dl}{1+\r_\dl}\right)\Dv\u
_\dl\,\mathrm{d}x\right] \,\mathrm{d}x\mathrm{d}t,\\
I_6=&\int_0^T\!\!\!\!\int_{\O}\psi \r_\dl u_\dl^iB_i[\Dv(\textrm{ln}(1+\r_\dl)\u_\dl]\,\mathrm{d}x\mathrm{d}t,\\
I_7=&\int_0^T\!\!\!\!\int_{\O}\psi(\nabla\times\H_\dl)\times\H_\dl\cdot B_i\left[\dashint_{\O}\textrm{ln}
(1+\r_\dl-\textrm{ln}(1+\r_\dl))\,\mathrm{d}x\right]\,\mathrm{d}x\mathrm{d}t.
\end{split}
\end{equation*}
Now, we can estimate the integrals $I_1-I_7$ as follows.

\noindent (1) First, we see that
$I_1$  is bounded uniformly in  $\dl$,
from \eqref{ml3}, \eqref{m4}, and the following property:
\begin{equation*}
\lim_{t \rightarrow \infty}
\frac{\textrm{ln}(1+t)}{t^\gamma}=0.
\end{equation*}

\noindent (2) As for the second term, we also have
\begin{equation*}
|I_2|\le \int_0^T\!\!\!\!\int_{\O}\left|\psi
\textrm{ln}(1+\r_\dl)\Dv
\u_\dl\right| \,\mathrm{d}x\mathrm{d}t\leq c,
\end{equation*}
by the H\"{o}lder inequality, \eqref{m6}, \eqref{ml3}, and the following property:
\begin{equation*}
\lim_{t \rightarrow \infty} \frac{\textrm{ln}^2(1+t)}{t^\gamma}=0,
\end{equation*}
where and throughout the rest of the paper, $c>0$ denotes a generic constant.

\noindent (3) Similarly, for the third term, we have
\begin{equation*}
|I_3|\le \int_0^T\!\!\!\!\int_{\O}\left|\psi_t\r_\dl u^i_\dl
B_i\left[\textrm{ln}(1+\r_\dl)-\dashint_{\O}\textrm{ln}(1+\r_\dl)\,\mathrm{d}x\right]
\right|\,\mathrm{d}x\mathrm{d}t\leq c. \end{equation*}
Here, we have used \eqref{m5}, \eqref{m6}, and the
embedding $W^{1,p}(\O)\hookrightarrow L^{\infty}(\O)$ for
$p>3$, since
$\textrm{ln}(1+\r_\dl)-\dashint_{\O}\textrm{ln}(1+\r_\dl)\,\mathrm{d}x\in
L^p(\O)$ for any $1<p<\infty$.

\noindent (4) Similarly to (3), we have
\begin{equation*}
\left|\int_0^T\!\!\!\!\int_{\O}\psi\partial_{x_j}u_\dl^i\partial_{x_j}
B_i\left[\textrm{ln}(1+\r_\dl)-\dashint_{\O}\textrm{ln}(1+\r_\dl)\,\mathrm{d}x\right]
\,\mathrm{d}x\mathrm{d}t\right|\leq c,
\end{equation*}
and, by \eqref{ml3}, \eqref{m6}, and H\"{o}lder inequality, we have
\begin{equation*}
\left|\int_0^T\!\!\!\!\int_{\O}\psi\r_\dl u_\dl^i
u_\dl^j\partial_{x_j}
B_i\left[\textrm{ln}(1+\r_\dl)-\dashint_{\O}\textrm{ln}(1+\r_\dl)\,\mathrm{d}x\right]
\,\mathrm{d}x\mathrm{d}t\right|\leq c.
\end{equation*}
Here, we used the restriction $\gamma>\f{3}{2}$.
Therefore, we obtain
\begin{equation*}
|I_4|
\leq c.
\end{equation*}

\noindent (5) Next, by H\"{o}lder inequality and \eqref{m9}, we have,
\begin{equation*}
\begin{split}
|I_5|
\leq c\int_0^T|\psi|
\|\r_\dl\|^{\f{1}{2}}_{L^\gamma(\O)}\|\sqrt{\r_\dl} u_\dl\|_{L^2(\O)}
\| B_i [w]\|_{L^{\f{2\gamma}{\gamma-1}}(\O)}\mathrm{d}t\leq c,
\end{split}
\end{equation*}
since $$w:=\left(\textrm{ln}(1+\r_\dl)-\f{\r_\dl}{1+\r_\dl}\right)\Dv\u_\dl\\-
\dashint_{\O}
\left(\textrm{ln}(1+\r_\dl)-\f{\r_\dl}{1+\r_\dl}\right)\Dv\u_\dl\,\mathrm{d}x\in L^r(\O),$$
for some $1<r<2,$ and here we have used the estimates \eqref{ml3}, \eqref{m5}.

\noindent (6) Similarly to (5), using \eqref{ml3}, \eqref{m5}, and we have
\begin{equation*}
|I_6|\leq\int_0^T\!\!\!\!\int_{\O}\left|\psi \r_\dl u_\dl^i
B_i[\Dv(\textrm{ln}(1+\r_\dl)\u_\dl)]\right|\,\mathrm{d}x\mathrm{d}t \leq c.
\end{equation*}
Here, we have also used the property \eqref{m16}.

\noindent (7) Finally, using H\"{o}lder inequality again, we have
\begin{equation*}
\begin{split}
|I_7|
\leq c\int_0^T|\psi|\|\nabla\times\H_\dl\|_{L^2(\O)}\|\H_\dl\|_{L^2(\O)}\mathrm{d}t\leq c.
\end{split}
\end{equation*}
Here we used the result $\varphi_i \in C([0,T]\times\O)$, \eqref{m7}, and \eqref{m8}.

Consequently, we have proved the following result:

\begin{Lemma}\label{m15}
The solutions $\r_\dl$ of system \eqref{m1} also
satisfies the following estimate
$$\int_0^T\!\!\!\!\int_{\O}\psi(\dl
\r_\dl^\beta+a\r_\dl^\gamma)\mathrm{ln}(1+\r_\dl)\,\mathrm{d}x\mathrm{d}t\leq
c,$$ where the constant c is independent of $\dl >0.$
\end{Lemma}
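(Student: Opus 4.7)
The plan is to produce the bound by testing the momentum balance in \eqref{m2} against a carefully designed function whose divergence recovers $\ln(1+\r_\dl)$ modulo its spatial average. Specifically, for $\psi \in \mathcal{D}(0,T)$, I would use
\be
\varphi_i(x,t) = \psi(t)\, B_i\!\left[\ln(1+\r_\dl) - \dashint_{\O}\ln(1+\r_\dl)\,\mathrm{d}x\right],
\en
where $B$ is the Bogovskii-type operator with the properties \eqref{m9}--\eqref{m16}. The subtraction of the mean is forced upon us by the requirement $\int_\O f = 0$ in the definition of $B$. The upshot of this choice is that when $\varphi$ is paired with $\nabla(a\r_\dl^\gamma + \dl\r_\dl^\beta)$, integration by parts transfers the derivative onto $\varphi$ and yields, after a sign, $\psi\bigl(\ln(1+\r_\dl) - \langle \ln(1+\r_\dl)\rangle_\O\bigr)(a\r_\dl^\gamma + \dl\r_\dl^\beta)$, i.e.\ precisely the integrand on the left-hand side of the lemma plus a harmless mean-value piece that ends up in $I_1$.

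First I would verify admissibility. Remark~2.1 applied to $b(z) = \ln(1+z)$ (which satisfies \eqref{e12}) gives the renormalized identity \eqref{x1}; combined with \eqref{ml3} this places $\ln(1+\r_\dl)$ in $C([0,T];L^p(\O))$ for every finite $p$, and then \eqref{m9} together with the Sobolev embedding $W^{1,p}_0\hookrightarrow L^\infty$ for $p>3$ puts $\varphi \in C([0,T]\times\ov\O)$. Plugging $\varphi$ into the weak form of the momentum equation and using \eqref{x1} to evaluate the time derivative of $B_i[\ln(1+\r_\dl)-\mathrm{mean}]$ produces the identity \eqref{m13} with the seven explicit integrals $I_1,\ldots,I_7$ as displayed.

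The remaining work is to bound each $I_j$ uniformly in $\dl$. For $I_1$ and $I_2$ I would exploit the elementary growth fact $\ln(1+t)/t^\gamma \to 0$ as $t\to \infty$, which allows the logarithm to be absorbed by the $L^\gamma$-norm controlled by \eqref{ml3}. For $I_3$--$I_6$ the main ingredients are the kinetic bound \eqref{m5}, the gradient bound \eqref{m6}, the Sobolev embedding $H^1_0 \hookrightarrow L^6$, and the continuity \eqref{m9}, \eqref{m16} of $B$. The genuine obstacle is $I_4$: controlling $\r_\dl u_\dl^i u_\dl^j\,\partial_{x_j}B_i[\cdot]$ demands $\r_\dl|\u_\dl|^2 \in L^1_t L^s_x$ with $s>1$, and H\"older applied with $\r_\dl \in L^\infty_t L^\gamma_x$ and $\u_\dl \in L^2_t L^6_x$ produces $s = 3\gamma/(\gamma+3)$; the inequality $s > 1$ is exactly the hypothesis $\gamma > \tfrac32$ in Theorem~\ref{mt}, so this step is tight.

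The only new contribution not present in the pure Navier-Stokes setting is $I_7$, the Lorentz force paired with $\varphi$. Here the cheap estimate suffices: $\varphi \in L^\infty_{t,x}$ by the argument above, while $(\na\times\H_\dl)\times\H_\dl$ factors as a product of an $L^2_{t,x}$ function controlled by \eqref{m8} and an $L^\infty_t L^2_x$ function controlled by \eqref{m7}, yielding $|I_7| \leq c\int_0^T|\psi|\,\|\na\times\H_\dl\|_{L^2}\|\H_\dl\|_{L^2}\,\mathrm{d}t$, which is finite independently of $\dl$. Summing the seven bounds delivers the claim with a constant depending only on $T$, $\psi$, $\O$, and $E[\r_0,\m_0,\H_0]$.
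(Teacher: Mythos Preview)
Your proposal is correct and follows essentially the same approach as the paper: you choose the identical test function $\varphi_i=\psi B_i[\ln(1+\r_\dl)-\text{mean}]$, invoke the renormalized identity \eqref{x1} to compute its time derivative, arrive at the decomposition \eqref{m13}, and bound the seven terms $I_1$--$I_7$ by the same combination of \eqref{ml3}--\eqref{m8}, the Bogovskii estimates \eqref{m9}, \eqref{m16}, and the restriction $\gamma>\tfrac32$. Your treatment of the magnetic term $I_7$ via $\varphi\in L^\infty_{t,x}$ and $(\nabla\times\H_\dl)\times\H_\dl\in L^2_tL^1_x$ is also exactly what the paper does.
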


\begin{Remark}\label{bb1}
Lemma \ref{m15} yields
$$\int_0^T\!\!\!\!\int_{\O}(\dl
\r_\dl^\beta+a\r_\dl^\gamma)\textrm{ln}(1+\r_\dl)\,\mathrm{d}x\mathrm{d}t\leq
c,$$ where c does not depend on $\dl$.
Using the similar method to Lemma 4.1 in \cite{f1}, it can be shown (cf.
\cite{p2, f1, f2}) that the optimal estimate for the density
$\r_\dl$ is the following:
\begin{equation*}
\int_0^T\!\!\!\!\int_{\O}(\dl
\r_\dl^\beta+a\r_\dl^\gamma)\r^\theta_\dl\,\mathrm{d}x\mathrm{d}t\leq
c,
\end{equation*}
where the constant c is independent of $\dl >0$, and
 $\theta>0$ is a constant. But as shown later, our estimate in Lemma \ref{m15}
is enough for our purpose.
\end{Remark}

Define the set
\begin{equation*}
J^{\dl}_k=\{(x,t)\in (0,T)\times \Omega: \; \r_\dl
(x,t)\leq k\}, \qquad  k>0, \quad \dl \in (0,1).
\end{equation*}
From \eqref{ml3}, there exists a constant $s\in(0,\infty)$
such that, for all $\dl \in (0,1)$ and $k>0$,
$$
\textrm{meas}\{\O\times(0,T)-J_k^\dl\}\leq \f{s}{k}.
$$
We have the following  estimate:
\begin{equation}\label{m17}
\begin{split}
\left|\int_0^T\!\!\!\!\int_{\O}\dl
\r_\dl^\beta\,\mathrm{d}x\mathrm{d}t\right|
&\leq \iint_{J_k^\dl}\dl\r_\dl^\beta\,\mathrm{d}x\mathrm{d}t+
\iint_{\O\times (0,T)-J_k^\dl}\dl\r_\dl^\beta\,\mathrm{d}x\mathrm{d}t\\
&\leq T\dl k^\beta \textrm{meas}\{\O\}+\dl
\int_0^T\!\!\!\!\int_{\O}\chi_{\O\times (0,T)-J_k^\dl}\r_\dl^\beta\,\mathrm{d}x\mathrm{d}t.
\end{split}
\end{equation}
Then, by the H\"{o}lder inequality in Orlicz spaces (cf. \cite{a1}) and  Lemma \ref{m15}, we obtain
\begin{equation}\label{m18}
\begin{split}
&\dl\int_0^T\!\!\!\!\int_{\O}\chi_{\O\times (0,T)-J_k^\dl}\r_\dl^\beta\,\mathrm{d}x\mathrm{d}t
\leq\dl\|\chi_{\O\times (0,T)-J_k^\dl}\|_{L_N}\max\{1,\int_0^T\!\!\!\!\int_{\O}
M(\r_\dl^\beta)\,\mathrm{d}x\mathrm{d}t\}\\
&\leq\dl\left(N^{-1}\left(\f{k}{s}\right)\right)^{-1}\max\{1, \int_0^T\!\!\!\!\int_{\O}2
(1+\r_\dl^\beta)\textrm{ln}(1+\r_\dl^\beta)\,\mathrm{d}x\mathrm{d}t\}\\
&\leq \dl\left(N^{-1}\left(\f{k}{s}\right)\right)^{-1}\max\{1, (4\textrm{ln}2)T\,\textrm{meas}\{\O\}+4\beta\int_0^T\!\!\!\!\int_{\O\cap\{\r_\dl\geq 1\}}
\r_\dl^\beta\textrm{ln}(1+\r_\dl)\,\mathrm{d}x\mathrm{d}t\}\\
&\leq \left(N^{-1}\left(\f{k}{s}\right)\right)^{-1}\max\{\dl, (4\textrm{ln}2)\dl
T\,\textrm{meas}\{\O\}+4\dl\beta\int_0^T\!\!\!\!\int_{\O}
\r_\dl^\beta\textrm{ln}(1+\r_\dl)\,\mathrm{d}x\mathrm{d}t\},
\end{split}
\end{equation}
where $L_M(\Omega)$, and $L_N(\Omega)$ are two Orlicz Spaces generated by two complementary N-functions
\begin{equation}\label{l1}
\begin{split}
&M(s)=(1+s)\textrm{ln}(1+s)-s,\\ &N(s)=e^s-s-1,
\end{split}
\end{equation}
respectively.
Due to Lemma \ref{m15}, we know that, if $\delta<1$,
$$\max\{\dl, (4\textrm{ln}2)\dl T\,\textrm{meas}\{\O\}+4\dl\beta\int_0^T\!\!\!\!\int_{\O}
\r_\dl^\beta\textrm{ln}(1+\r_\dl)\,\mathrm{d}x\mathrm{d}t\}\leq c,$$
for some $c>0$ which is independent of $\delta$.
Combining \eqref{m17} with \eqref{m18}, we obtain the estimate
\begin{equation*}
\left|\int_0^T\!\!\!\!\int_{\O}\dl
\r_\dl^\beta\,\mathrm{d}x\mathrm{d}t\right|\leq T\dl
k^\beta
\textrm{meas}\{\O\}+c\left(N^{-1}\left(\f{k}{s}\right)\right)^{-1},
\end{equation*}
where c does not depend on $\dl$ and k. Consequently
\begin{equation}\label{m19}
\limsup_{\dl\rightarrow 0}\left|\int_0^T\!\!\!\!\int_{\O}\dl
\r_\dl^\beta\,\mathrm{d}x\mathrm{d}t\right|\leq
c\left(N^{-1}\left(\f{k}{s}\right)\right)^{-1}.
\end{equation}
The right-hand side of \eqref{m19} tends to zero as $k\rightarrow\infty$.
Thus, we have
\begin{equation*}
\lim_{\dl\rightarrow 0}\int_0^T\!\!\!\!\int_{\O}\dl
\r_\dl^\beta\,\mathrm{d}x\mathrm{d}t=0,
\end{equation*}
which yields
\begin{equation}
\dl \r_\dl^\beta \rightarrow 0  \textrm{ in }
\mathcal{D}'(\O\times (0,T)).
\end{equation}

\subsection{Passing to the limit}
The uniform estimates on $\r$ in Lemma \ref{m15}, and Proposition \ref{m1} imply,
as $\dl\to 0$,
\begin{equation}\label{m20}
\r_\dl\rightarrow \r  \textrm{   in   }
C([0,T];L^\gamma_{weak}(\O)),
\end{equation}
\begin{equation}\label{m21}
\u_\dl\rightarrow \u
\textrm{   weakly  in   } L^2([0,T];H_0^1(\O)),
\end{equation}
and
\begin{equation}\label{m22}
\begin{split}
&\H_\dl\rightarrow \H
\textrm{   weakly*  in   } L^2([0,T];H_0^1(\O))\cap L^\infty([0,T];L^2(\O)), \\
&\Dv\H=0 \textrm{ in } \mathcal{D}'(\O\times (0,T));
\end{split}
\end{equation}
and, from Lemma \ref{m15} and Proposition 2.1 in \cite{f2}, we have, as $\dl\to 0$,
\begin{equation}\label{m23}
\rho_\dl^\gamma\rightarrow \ov{\r^\gamma} \textrm{
weakly in } L^1([0,T];L^1(\O)),
\end{equation}
subject to a subsequence.

By \eqref{m21}, \eqref{m22} and the compactness of $H_0^1(\O)\hookrightarrow L^2(\O)$, we obtain,
\begin{equation}\label{m24}
\nabla\times(\u_\delta\times\H_\delta)\rightarrow\nabla\times(\u\times\H) \quad\textrm{ in } \mathcal{D}'(\O\times (0,T)),
\end{equation}
and
\begin{equation}\label{m25}
(\na \times \H_\dl)\times \H_\dl\rightarrow(\na \times \H)\times \H \quad\textrm{ in } \mathcal{D}'(\O\times (0,T)),
\end{equation}
as $\dl\to 0$.
On the other hand, by virtue of the momentum balance in \eqref{m2} and estimates \eqref{ml3}-\eqref{m8},
we have, as $\dl\to 0$,
\begin{equation}\label{x4}
\r_\dl\u_\dl\rightarrow\r\u \textrm{  in   }C([0,T];L_{weak}^\f{2\gamma}{\gamma+1}(\O)).
\end{equation}
Similarly, we have, as $\dl\to 0$,
$$\H_\dl\rightarrow\H \textrm{  in   }C([0,T];L_{weak}^2(\O)).$$
Thus, the limits $\r$, $\r \u$, $\H$ satisfy the
initial conditions of \eqref{e4} in the sense of distribution.

Since $\gamma>\f{3}{2}$, \eqref{x4} and \eqref{m21} combined with the compactness of $H^1(\O)\hookrightarrow L^2(\O)$ imply, as $\dl\to 0$,
$$\r_\dl\u_\dl\otimes\u_\dl\rightarrow\r\u\otimes\u   \textrm{ in } \mathcal{D}'(\O\times (0,T)).$$
Consequently, letting $\dl\rightarrow 0$ in \eqref{m2} and making use of \eqref{m20}-\eqref{x4}, $(\r, \u, \H)$ satisfies
\begin{equation}\label{m26}
\partial _t \r+\Dv(\r \u)=0,
\end{equation}
\begin{equation}\label{ml27}
\partial _t(\r \u)+\Dv(\r \u\otimes \u)-\mu \triangle \u -(\lambda+\mu)\nabla \Dv
\u +a \nabla \ov{\r^\gamma}=(\na \times \H)\times \H,
\end{equation}
\begin{equation}\label{m28}
\H_t-\nabla\times(\u\times\H)=-\nabla\times(\nu\nabla\times\H), \quad \Dv\H=0,
\end{equation}
 in $\mathcal{D}'(\O\times (0,T))$. Therefore the only thing left to complete
the proof of Theorem \ref{mt} is to show the strong convergence of
$\r_\dl$ in $L^1$ or, equivalently,
$\ov{\r^\gamma}=\r^\gamma$.

Since $\r_\dl$, $\u_\dl$ is a
renormalized solution of the continuity equation \eqref{m2} in
$\mathcal{D}'(\R^3\times(0,T))$, we have
\begin{equation}\label{m35}
T_k(\r_\dl)_t+\Dv(T_k(\r_\dl)\u_\dl)+(T'_k(\r_\dl)\r_\dl-T_k(\r_\dl))\Dv
\u_\dl=0 \textrm{ in }\mathcal{D}'(\R^3\times(0,T)),
\end{equation}
where $T_k$ is the cut-off functions defined as
follows:
$$T_k(z)=kT\left(\frac{z}{k}\right)\; \textrm{ for } z\in R, \;
k=1,2, \dots $$ and $T\in C^\infty(R)$ is concave and is chosen such that
\begin{equation*}
T(z)=\begin{cases} z,& z\leq 1, \\2, & z\geq 3. \end{cases}
\end{equation*}
Passing to the limit for $\dl\rightarrow 0+$, we obtain
\begin{equation*}
\partial_t\ov{T_k(\r)}+\Dv(\ov{T_k(\r)}\u)+\ov{(T'_k(\r)\r
-T_k(\r))\Dv \u}=0 \textrm{ in }
\mathcal{D}'((0,T)\times \R^3),
\end{equation*}
where
\begin{equation*}
(T'_k(\r_\dl)\r_\dl-T_k(\r_\dl))\Dv\u_\dl\rightarrow \ov{(T'_k(\r)\r-T_k(\r))\Dv \u} \textrm{ weakly in } L^2(\O\times (0,T))),
\end{equation*}
and \begin{equation*}
T_k(\r_\dl)\rightarrow\ov{T_k(\r)} \textrm{ in }
C([0,T];L^p_{weak}(\O)) \textrm{ for all }  1\leq p<\infty.
\end{equation*}

\subsection{The effective viscous flux}
In this section, we discuss the effective viscous flux $p(\r)-(\lambda+2\mu)\Dv
\u$. Similarly to
\cite{p2, f1, f2}, we prove the following auxiliary result:

\begin{Lemma}\label{m31}
Let $\r_\dl$, $\u_\dl$ be the
sequence of approximation solutions obtained in Proposition \eqref{m2}.
Then,
$$
\lim_{\dl\rightarrow
0+}\int_0^T\psi\int_{\O}\phi(a\r_\dl^\gamma-(\lambda+2\mu)\Dv
\u_\dl)T_k(\r_\dl)\,\mathrm{d}x\mathrm{d}t$$
$$=\int_0^T\psi\int_{\O}\phi(a\ov{\r^\gamma}-(\lambda+2\mu)\Dv
\u)\ov{T_k(\r)}\,\mathrm{d}x\mathrm{d}t, $$ for
any $\psi\in \mathcal{D}(0,T)$ and $\phi\in \mathcal{D}(\O)$.
\end{Lemma}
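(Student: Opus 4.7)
The plan is to adapt the Lions--Feireisl identification of the effective viscous flux (see \cite{p2, f1, f2}) to the MHD system. The key device is a family of nonlocal test functions whose divergence captures the truncated density. Define
$$\varphi_\dl(x,t) = \psi(t)\phi(x)\,\mathcal{A}[T_k(\r_\dl)], \qquad \varphi(x,t) = \psi(t)\phi(x)\,\mathcal{A}[\ov{T_k(\r)}],$$
where $\mathcal{A}_i[w] = \partial_{x_i}\Delta^{-1}[w\chi_\O]$ is the standard inverse-divergence operator on $\R^3$. By Calder\'on--Zygmund estimates and Sobolev embeddings, together with the uniform bounds \eqref{ml3}--\eqref{m8}, these are admissible test functions both in the $\dl$-momentum balance from Proposition \ref{m1} and in its weak limit \eqref{ml27}.

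\textbf{Step 1 (produce the two identities).} I would substitute $\varphi_\dl$ into the $\dl$-momentum equation and integrate by parts, using the approximate renormalized continuity equation \eqref{m35} to convert $\partial_t\mathcal{A}[T_k(\r_\dl)]$ into spatial terms. After reorganization, the quantity $\psi\phi(a\r_\dl^\gamma+\dl\r_\dl^\beta-(\lambda+2\mu)\Dv\u_\dl)T_k(\r_\dl)$ appears explicitly, accompanied by convective, commutator, and magnetic contributions. Performing the analogous manipulation with $\varphi$ in \eqref{ml27} gives a parallel identity involving $\ov{T_k(\r)}$ and $\ov{\r^\gamma}$. The artificial-pressure piece $\dl\r_\dl^\beta T_k(\r_\dl)$ vanishes as $\dl\to 0$ thanks to the result of Subsection 5.1, while the convergences \eqref{m20}--\eqref{x4} dispose of the linear-in-density terms.

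\textbf{Step 2 (the commutator cancellation).} Among the hydrodynamic terms the only nontrivial passage to the limit is the cross term
$$\int_0^T\!\!\!\int_\O \psi\phi\, u_\dl^i\bigl(T_k(\r_\dl)\,\mathcal{R}_{ij}[\r_\dl u_\dl^j]-\r_\dl u_\dl^j\,\mathcal{R}_{ij}[T_k(\r_\dl)]\bigr)\,dx\,dt,$$
with $\mathcal{R}_{ij}=\partial_{x_i}\mathcal{A}_j$. I would invoke the compensated-compactness (div--curl) identity used by Feireisl (Lemma 4.2 in \cite{f1}) to conclude that this commutator converges weakly in $L^1$ to the analogous expression with $\r$, $u$, $\ov{T_k(\r)}$ replacing $\r_\dl$, $u_\dl$, $T_k(\r_\dl)$. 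Subtracting the two identities then collapses the hydrodynamic terms into exactly the desired equality.

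\textbf{Step 3 (the magnetic contribution and main obstacle).} The MHD-specific new ingredient is
$$\int_0^T\!\!\!\int_\O \psi\phi\,\bigl((\na\times\H_\dl)\times\H_\dl\bigr)\cdot\mathcal{A}[T_k(\r_\dl)]\,dx\,dt.$$
Combining \eqref{m22}, the induction equation \eqref{m28}, and Aubin--Lions (with $\partial_t\H_\dl$ bounded in a negative Sobolev norm via \eqref{m7}--\eqref{m8}), one has $\H_\dl\to\H$ strongly in $L^2([0,T];L^2(\O))$, so $(\na\times\H_\dl)\times\H_\dl\to(\na\times\H)\times\H$ weakly in $L^1([0,T]\times\O)$. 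Simultaneously, by Calder\'on--Zygmund and the compact embedding $W^{1,p}\hookrightarrow C(\ov\O)$ for $p>3$, the convergence $T_k(\r_\dl)\to\ov{T_k(\r)}$ in $C([0,T];L^p_{\mathrm{weak}}(\O))$ lifts to $\mathcal{A}[T_k(\r_\dl)]\to\mathcal{A}[\ov{T_k(\r)}]$ strongly in $C([0,T];C(\ov\O))$. Pairing this uniform convergence against the weak $L^1$-limit of the magnetic cross product shows that this term passes to its expected limit on both sides and cancels in the difference. I expect the principal obstacle to be precisely this compatibility: one must sustain, uniformly in $\dl$, enough regularity of $\mathcal{A}[T_k(\r_\dl)]$ to pair it with $(\na\times\H_\dl)\times\H_\dl$. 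The restriction $\gamma>\tfrac32$ (which already secures integrability of $\r\u\otimes\u$) together with the strong $L^2$-compactness of $\H_\dl$ supplied by the parabolic induction equation is exactly what makes the pairing succeed and closes the argument.
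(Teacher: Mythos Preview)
Your proposal is correct and follows essentially the same route as the paper: the same nonlocal test functions $\psi\phi\,\mathcal{A}_i[T_k(\r_\dl)]$ inserted into both the $\dl$-level and the limit momentum balances to produce the two parallel identities, the same Feireisl commutator lemma for the cross term involving $\mathcal{R}_{ij}$, and the same uniform convergence $\mathcal{A}_i[T_k(\r_\dl)]\to\mathcal{A}_i[\ov{T_k(\r)}]$ in $C(\ov{(0,T)\times\O})$ to dispose of the magnetic contribution. Your Aubin--Lions justification for the magnetic term is in fact slightly more explicit than the paper's, which simply pairs that uniform convergence against the distributional convergence \eqref{m25} already recorded in Section~5.2.
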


\begin{proof}
As in \cite{f1, f2}, we consider the
operators$$\mathcal{A}_i[v]=\triangle^{-1}[\partial_{x_i}v],\;
i=1,2,3$$where $\triangle^{-1}$ stands for the inverse of the
Laplace operator on $\R^3$. To be more specific, $\mathcal{A}_i$ can
be expressed by their Fourier symbol
\begin{equation*}
\mathcal{A}_i[\cdot]=\mathcal{F}^{-1}\left[\f{-\mathrm{i}\xi_i}{|\xi|^2}\mathcal{F}[\cdot]\right],
\; i=1,2,3,
\end{equation*}
with the following properties (see \cite{f1}):
\begin{equation*}
\begin{split}
&\| \mathcal{A}_i v\|_{W^{1,s}(\O)}\leq c(s,\O)\|
v\|_{L^s(R^3)}, \; 1<s<\infty, \\
&\| \mathcal{A}_i v\|_{L^q(\O)}\leq c(q,s,\O)\|
v\|_{L^s(R^3)}, \; q \textrm{ finite, provided } \f{1}{q}\geq
\f{1}{s}-\f{1}{3},\\
&\| \mathcal{A}_i v\|_{L^{\infty}(\O)}\leq c(s,\O)\|
v\|_{L^s(R^3)},\; \textrm{ if } s>3.
\end{split}
\end{equation*}
Next, we use the
quantities$$\varphi_i(t,x)=\psi(t)\phi(x)\mathcal{A}_i[T_k(\r_\dl)], \; \psi\in
\mathcal{D}(0,T), \; \phi\in \mathcal{D}(\O), \; i=1,2,3, $$ as the test
functions for the momentum balance equation in \eqref{m2} to obtain,
\begin{equation}\label{m29}
\begin{split}
&\int_0^T\!\!\!\!\int_{\O}\psi\phi(a\r_\dl^\gamma+\dl\r_\dl^\beta-(\lambda+2\mu)\Dv
\u_\dl)T_k(\r_\dl)\,\mathrm{d}x\mathrm{d}t \\
&=\int_0^T\!\!\!\!\int_{\O}\psi\partial_{x_i}\phi((\lambda+\mu)\Dv
\u_\dl-a\r_\dl^\gamma-\dl\r_\dl^\beta)\mathcal{A}_i
[T_k(\r_\dl)]\,\mathrm{d}x\mathrm{d}t \\
&\quad +\mu\int_0^T\!\!\!\!\int_{\O}\psi\left(\partial_{x_j}\phi
\partial_{x_j}u^i_\dl
\mathcal{A}_i[T_k(\r_\dl)]-u^i_\dl\partial_{x_j}\phi\partial_{x_j}\mathcal{A}_i
[T_k(\r_\dl)]+u^i_\dl\partial
_{x_i}\phi T_k(\r_\dl)\right)\,\mathrm{d}x\mathrm{d}t\\
&\quad -\int_0^T\!\!\!\!\int_{\O}\phi\rho_\dl
u_\dl^i\left(\partial_t\psi
\mathcal{A}_i[T_k(\r_\dl)]+\psi
\mathcal{A}_i[(T_k(\r_\dl)-T'_k(\r_\dl)\r_\dl)\Dv
\u_\dl]\right)\,\mathrm{d}x\mathrm{d}t\\
&\quad -\int_0^T\!\!\!\!\int_{\O}\psi\r_\dl u^i_\dl
u_\dl^j\partial_{x_j}\phi
\mathcal{A}_i[T_k(\r_\dl)]\,\mathrm{d}x\mathrm{d}t\\
&\quad +\int_0^T\!\!\!\!\int_{\O}\psi
u_\dl^i\left(T_k(\r_\dl)\mathcal{R}_{i,j}[\r_\dl
u^j_\dl]-\phi \r_\dl u^j_\dl
\mathcal{R}_{i,j}[T_k(\r_\dl)]\right)\,\mathrm{d}x\mathrm{d}t\\
&\quad -\int_0^T\!\!\!\!\int_{\O}\psi\phi(\na \times \H_\dl)\times \H_\dl\cdot\mathcal{A}[T_k(\r_\dl)]\,\mathrm{d}x\mathrm{d}t,
\end{split}
\end{equation}
where the operators
$\mathcal{R}_{i,j}=\partial_{x_j}\mathcal{A}_i[v]$ and the summation
convention is used to simplify notations.

Analogously, we can repeat the above arguments for
equation \eqref{ml27} and the test functions
$$\varphi_i(t,x)=\psi(t)\phi(x)\mathcal{A}_i[\overline{T_k(\rho)}],\; i=1,2,3,$$
to obtain
\begin{equation}\label{m30}
\begin{split}
&\int_0^T\!\!\!\!\int_{\O}\psi\phi(a\ov{\r^\gamma}-(\lambda+2\mu)\Dv
\u)\ov{T_k(\r)}\,\mathrm{d}x\mathrm{d}t\\
&=\int_0^T\!\!\!\!\int_{\O}\psi\partial_{x_i}\phi((\lambda+\mu)\Dv
\u-a\ov{\r^\gamma})\mathcal{A}_i[\ov{T_k(\r)}]\,\mathrm{d}x\mathrm{d}t\\
&\quad +\mu\int_0^T\!\!\!\!\int_{\O}\psi\left(\partial_{x_j}\phi
\partial_{x_j}u^i
\mathcal{A}_i[\ov{T_k(\r)}]-u^i\partial_{x_j}\phi\partial_{x_j}\mathcal{A}_i[\ov{T_k(\r)}]+u^i\partial_{x_i}\phi
\ov{T_k(\r)}\right)\,\mathrm{d}x\mathrm{d}t\\
&\quad -\int_0^T\!\!\!\!\int_{\O}\phi\r u^i\left(\partial_t\psi
\mathcal{A}_i[\ov{T_k(\r)}]+\psi
\mathcal{A}_i[\ov{(T_k(\r)-T'_k(\r)\r)\Dv
\u}]\right)\,\mathrm{d}x\mathrm{d}t\\
&\quad -\int_0^T\!\!\!\!\int_{\O}\psi\rho u^i u^j\partial_{x_j}\phi
\mathcal{A}_i[\ov{T_k(\r)}]\,\mathrm{d}x\mathrm{d}t\\
&\quad +\int_0^T\!\!\!\!\int_{\O}\psi
u^i\left(\ov{T_k(\r)}\mathcal{R}_{i,j}[\phi\r u^j]-\phi \r
u^j \mathcal{R}_{i,j}[\ov{T_k(\r)}]\right)\,\mathrm{d}x\mathrm{d}t\\
&\quad -\int_0^T\!\!\!\!\int_{\O}\psi\phi(\na \times \H)\times \H\cdot\mathcal{A}[\ov{T_k(\r)}]\,\mathrm{d}x\mathrm{d}t.
\end{split}
\end{equation}

Similarly to \cite{f1, f2}, it can be shown that all the terms on
the right-hand side of \eqref{m29} converge to their counterparts in
\eqref{m30}. Indeed, with the relations \eqref{m20}-\eqref{x4}  and the Sobolev embedding theorem
in mind, it is easy to see that it is enough to show
\begin{equation*}
\begin{split}
&\int_0^T\!\!\!\!\int_{\O}\psi
u_\dl^i\left(T_k(\r_\dl)\mathcal{R}_{i,j}[\phi\r_\dl
u^j_\dl]-\phi \r_\dl u^j_\dl
\mathcal{R}_{i,j}[T_k(\r_\dl)]\right)\,\mathrm{d}x\mathrm{d}t\\
&\rightarrow \int_0^T\!\!\!\!\int_{\O}\psi u^i\left(\ov{T_k(\r)}R_{i,j}[\phi\r
u^j]-\phi \r u^j
\mathcal{R}_{i,j}[\ov{T_k(\r)}]\right)\,\mathrm{d}x\mathrm{d}t,
\end{split}
\end{equation*}
because the properties of  $\mathcal{A}_i$ and the weak convergence of
$\u$ in $L^2([0,T]; H^1(\O))$ imply
\begin{equation*}
\mathcal{A}_i(T_k(\r_\dl))\rightarrow
\mathcal{A}_i(\ov{T_k(\r)}) \textrm{ in }
C(\ov{(0,T)\times \O}),
\end{equation*}
\begin{equation*}
\mathcal{R}_{i,j}(T_k(\r_\dl))\rightarrow
\mathcal{R}_{i,j}(\ov{T_k(\r)}) \textrm{ weakly in }
L^p([0,T]\times\O) \textrm{ for all } 1<p<\infty,
\end{equation*}
and
\begin{equation*}
\begin{split}
\mathcal{A}_i[(T_k(\r_\dl)-T'_k(\r)\r)\Dv
\u_\dl]\rightarrow
\mathcal{A}_i[\ov{(T_k(\r)-T'_k(\r)\r)\Dv \u}] \; \textrm{ weakly in } L^2([0,T]; H^1(\O)).
\end{split}
\end{equation*}
From Lemma 3.4 in \cite{f1}, we have
\begin{equation*}
\begin{split}
& T_k(\r_\dl)\mathcal{R}_{i,j}[\phi\r_\dl
u^j_\dl]-\phi \r_\dl u^j_\dl
\mathcal{R}_{i,j}[T_k(\r_\dl)]\\
&\rightarrow \overline{T_k(\r)}R_{i,j}[\phi\r u^j]-\phi \r u^j
\mathcal{R}_{i,j}[\ov{T_k(\r)}] \; \textrm{ weakly in }\;
L^r(\O), \; i, j=1,2,3,
\end{split}
\end{equation*}
for some $r>1$.
Hence, we complete the proof of Lemma \ref{m31}.
\end{proof}

\subsection{ The amplitude of oscillations}
The main result of this subsection reads as follows, and is essentially taken  from \cite{f1} (cf. Lemma 4.3 in \cite{f1}):

\begin{Lemma}\label{m34}
There exists a constant c independent of k such that
$$
\limsup_{\dl\rightarrow 0+}\|
T_k(\r_\dl)-T_k(\r)\|_{L^{\gamma+1}((0,T)\times
\O)}\leq c.
$$
\end{Lemma}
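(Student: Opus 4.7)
The plan is to combine a pointwise monotonicity inequality with the effective viscous flux identity from Lemma~\ref{m31}. Specifically, for $a,b\ge 0$ and $\gamma\ge 1$ there is the elementary bound
$$(a^\gamma - b^\gamma)(T_k(a) - T_k(b)) \ge |T_k(a) - T_k(b)|^{\gamma+1},$$
which follows from $T_k$ being nondecreasing and $1$-Lipschitz together with $a^\gamma - b^\gamma \ge (a-b)^\gamma$ for $a\ge b\ge 0$. Applying this pointwise to $(\r_\dl,\r)$ and integrating reduces the lemma to a uniform (in $k$ and $\dl$) bound on
$$\int_0^T\!\!\!\!\int_\O (\r_\dl^\gamma - \r^\gamma)(T_k(\r_\dl)-T_k(\r))\,\mathrm{d}x\mathrm{d}t.$$

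Next I would expand this product and pass $\dl\to 0$ using \eqref{m20}--\eqref{m23}, rewriting the limit as $I_1+I_2$ with
$$I_1 = \int_0^T\!\!\!\!\int_\O\bigl(\ov{\r^\gamma T_k(\r)} - \ov{\r^\gamma}\,\ov{T_k(\r)}\bigr)\mathrm{d}x\mathrm{d}t,$$
$$I_2 = \int_0^T\!\!\!\!\int_\O (\ov{\r^\gamma}-\r^\gamma)(\ov{T_k(\r)}-T_k(\r))\mathrm{d}x\mathrm{d}t.$$
Two Jensen-type inequalities for weak limits make $I_2\le 0$: convexity of $z\mapsto z^\gamma$ gives $\ov{\r^\gamma}\ge\r^\gamma$ a.e., while concavity of $T_k$ on $\R^+$ gives $\ov{T_k(\r)}\le T_k(\r)$ a.e., so the two factors of $I_2$ have opposite signs pointwise.

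For $I_1$ I would invoke Lemma~\ref{m31} with test functions $\psi\phi$ approximating the characteristic function of $(0,T)\times\O$ to obtain
$$I_1 = \f{\lambda+2\mu}{a}\int_0^T\!\!\!\!\int_\O\bigl(\ov{T_k(\r)\,\mathrm{div}\,\u}-\ov{T_k(\r)}\,\mathrm{div}\,\u\bigr)\mathrm{d}x\mathrm{d}t.$$
Since $T_k(\r)\in L^\infty$ pairs with $\mathrm{div}\,\u_\dl\rightharpoonup\mathrm{div}\,\u$ weakly in $L^2$, the integrand on the right equals $\lim_\dl (T_k(\r_\dl)-T_k(\r))(\mathrm{div}\,\u_\dl-\mathrm{div}\,\u)$ after dropping two terms that vanish in the limit. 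H\"older with conjugate exponents $\gamma+1$ and $(\gamma+1)/\gamma$, together with $(\gamma+1)/\gamma\le 2$ for $\gamma\ge 1$ and the uniform bound \eqref{m6} on $\mathrm{div}\,\u_\dl$, then yields
$$|I_1|\le c\,\limsup_{\dl\to 0}\|T_k(\r_\dl)-T_k(\r)\|_{L^{\gamma+1}((0,T)\times \O)},$$
with $c$ depending only on the a priori energy bounds. Setting $X_k := \limsup_{\dl\to 0}\|T_k(\r_\dl)-T_k(\r)\|_{L^{\gamma+1}}$, the whole chain collapses to $X_k^{\gamma+1}\le cX_k$, hence $X_k\le c^{1/\gamma}$ uniformly in $k$, which is the desired estimate.

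The main obstacle will be extracting a constant that is genuinely independent of $k$: a naive $L^2$--$L^2$ H\"older pairing on $I_1$ would cost $\|T_k(\r_\dl)-T_k(\r)\|_{L^2}\le 4k|\O\times(0,T)|^{1/2}$ and hence scale linearly in $k$. The trick is to pair in $L^{\gamma+1}$ against $L^{(\gamma+1)/\gamma}$, which is possible precisely because the effective viscous flux identity converts the pressure defect $I_1$ into a velocity-divergence pairing while the elementary inequality produces an $L^{\gamma+1}$-norm on the left; this self-improving structure is exactly what forces the condition $\gamma>1$ and matches the $\gamma>\f32$ regime of the paper.
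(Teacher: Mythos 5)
Your argument is correct and follows essentially the same route as the paper: the same pointwise monotonicity inequality $(z^\gamma-y^\gamma)(T_k(z)-T_k(y))\geq |T_k(z)-T_k(y)|^{\gamma+1}$, the same convexity/concavity sign argument disposing of the term $(\ov{\r^\gamma}-\r^\gamma)(\ov{T_k(\r)}-T_k(\r))$, and the same use of Lemma \ref{m31} to trade the pressure defect for a divergence pairing. The only cosmetic difference is the closing absorption step, where you apply H\"older with exponents $\gamma+1$ and $(\gamma+1)/\gamma$ to get $X_k^{\gamma+1}\leq cX_k$, while the paper pairs in $L^2$ and then uses the embedding $L^{\gamma+1}\hookrightarrow L^2$ plus Young's inequality to reach $X_k^{\gamma+1}\leq c+\frac12X_k^{\gamma+1}$; both are valid.
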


\begin{proof}
By the convexity of functions $t\rightarrow p(t)$, $t\rightarrow
-T_k(t)$, one has
\begin{equation}\label{m32}
\begin{split}
&\limsup_{\dl\rightarrow
0+}\int_0^T\!\!\!\!\int_{\O}\left(\r_\dl^\gamma
T_k(\r_\dl)-\ov{\r^\gamma}
(\ov{T_k(\r)})\right)\mathrm{d}x\mathrm{d}t\\
&=\limsup_{\dl\rightarrow
0+}\int_0^T\!\!\!\!\int_{\O}(\r_\dl^\gamma-\r^\gamma)(T_k(\r_\dl)-T_k(\r))\,\mathrm{d}x\mathrm{d}t
\\
&\quad +\int_0^T\!\!\!\!\int_{\O}(\ov{\r^\gamma}-\r^\gamma)(T_k(\r)-\overline{T_k(\r)})
\,\mathrm{d}x\mathrm{d}t\\
&\geq \limsup_{\dl\rightarrow
0+}\int_0^T\!\!\!\!\int_{\O}(\r_\dl^\gamma-\r^\gamma)(T_k(\r_\dl)-T_k(\r))
\,\mathrm{d}x\mathrm{d}t.
\end{split}
\end{equation}
On one hand, we have
\begin{equation*}
y^\gamma-z^\gamma=\int_z^y \gamma s^{\gamma-1}\mathrm{d}s\geq
\gamma\int_z^y(s-z)^{\gamma-1}\mathrm{d}s=\gamma(y-z)^\gamma,
\end{equation*}
for all $y\geq z\geq 0$, and
\begin{equation*}
|T_k(y)-T_k(z)|^\gamma\leq |y-z|^\gamma,
\end{equation*}
thus,
\begin{equation*}
\begin{split}
(z^\gamma-y^\gamma)(T_k(z)-T_k(y))
&\geq \gamma|T_k(z)-T_k(y)|^\gamma|T_k(z)-T_k(y)| \\
&=\gamma|T_k(z)-T_k(y)|^{\gamma+1},
\end{split}
\end{equation*}
for all $z,y\geq 0$.
On the other hand,
\begin{equation}\label{m33}
\begin{split}
&\limsup_{\dl\rightarrow 0+}\int_0^T\!\!\!\!\int_{\O}\left(\Dv\u_\dl T_k(\r_\dl)-\Dv
\u \ov{T_k(\r)}\right)\mathrm{d}x\mathrm{d}t\\
&=\limsup_{\dl\rightarrow 0+}\int_0^T\!\!\!\!\int_{\O}\left(T_k(\r_\dl)-T_k(\r)+T_k(\r)-
\ov{T_k(\r)}\right)\Dv\u_\dl\,\mathrm{d}x\mathrm{d}t \\
&\leq 2\sup_{\dl}\| \Dv
u_\dl\|_{L^2((0,T)\times
\O)}\limsup_{\dl\rightarrow 0+}\|
T_k(\r_\dl)-T_k(\r)\|_{L^2((0,T)\times \O)}\\
&\leq c\limsup_{\dl\rightarrow 0+}\|
T_k(\r_\dl)-T_k(\r)\|_{L^2((0,T)\times \O)}\\
&\leq c+\f{1}{2}\limsup_{\dl\rightarrow 0+}\|
T_k(\r_\dl)-T_k(\r)\|^{\gamma+1}_{L^{\gamma+1}((0,T)\times
\O)}.
\end{split}
\end{equation}
The relations \eqref{m32}, \eqref{m33} combined with Lemma \ref{m31}  yield the
desired conclusion.
\end{proof}

\subsection{ The renormalized solutions}
We now use  Lemma \ref{m34} to prove the following crucial result:
\begin{Lemma}\label{m40}
The limit functions $\r, \u$ solve \eqref{m26} in the
sense of renormalized solutions, i.e.,
\begin{equation}\label{m38}
\partial_t b(\r)+\Dv(b(\r)\u)+(b'(\r)\r
-b(\r))\Dv \u=0,
\end{equation}
holds in $\mathcal{D}'(\R^3\times (0,T))$ for any $b \in C^1(R)$
satisfying $b'(z)=0$ for all $z\in \textrm{R}$ large enough, say,
$z\geq M $, where the constant M may depend on $b$.
\end{Lemma}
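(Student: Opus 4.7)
The plan is to adopt the two-step approach of Lions and Feireisl: first, derive the renormalized equation for the truncation $\ov{T_k(\r)}$, which is bounded in $L^\infty$ so that the classical DiPerna--Lions commutator argument applies directly; second, pass to the limit $k\to\infty$, where the control of the remainder rests on the oscillation defect estimate of Lemma~\ref{m34}.

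\textbf{Step 1.} Passing to the limit $\dl \to 0$ in the cut-off equation \eqref{m35}, using the convergences \eqref{m20}--\eqref{m21} and the uniform bound $0 \le T_k(\r_\dl) \le 2k$, one obtains
\begin{equation*}
\partial_t \ov{T_k(\r)} + \Dv(\ov{T_k(\r)}\u) + \ov{(T_k'(\r)\r - T_k(\r))\Dv \u} = 0
\end{equation*}
in $\mathcal{D}'(\R^3 \times (0,T))$. Since $\ov{T_k(\r)} \in L^\infty(\O \times (0,T))$ and $\u \in L^2(0,T;H_0^1(\O))$, the standard DiPerna--Lions commutator lemma applies: for any $b \in C^1(\R)$ with $b'$ compactly supported as in the statement, one obtains
\begin{equation*}
\partial_t b(\ov{T_k(\r)}) + \Dv\bigl(b(\ov{T_k(\r)})\u\bigr) + \bigl[b'(\ov{T_k(\r)})\ov{T_k(\r)} - b(\ov{T_k(\r)})\bigr]\Dv \u = -b'(\ov{T_k(\r)})\,\ov{(T_k'(\r)\r - T_k(\r))\Dv \u}
\end{equation*}
in $\mathcal{D}'(\R^3 \times (0,T))$.

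\textbf{Step 2 (main obstacle: vanishing of the defect).} The heart of the argument is to establish
\begin{equation*}
\ov{(T_k'(\r)\r - T_k(\r))\Dv\u} \longrightarrow 0 \quad \text{in } L^1(\O\times(0,T)) \text{ as } k\to\infty.
\end{equation*}
By weak lower semicontinuity, H\"older's inequality, and \eqref{m6},
\begin{equation*}
\|\ov{(T_k'(\r)\r - T_k(\r))\Dv\u}\|_{L^1} \le c\,\liminf_{\dl \to 0}\,\|T_k'(\r_\dl)\r_\dl - T_k(\r_\dl)\|_{L^2(\O\times(0,T))}.
\end{equation*}
The elementary pointwise bound $|T_k'(z)z - T_k(z)| \le C\, z\,\chi_{\{z\ge k\}}$ combined only with the $L^\gamma$ estimate \eqref{ml3} is insufficient when $\gamma \in (3/2, 2]$; this is precisely where Lemma~\ref{m34} becomes indispensable. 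The uniform $L^{\gamma+1}$ bound on $T_k(\r_\dl) - T_k(\r)$, which is stronger than $L^2$ since $\gamma+1 > 2$, interpolated with the uniform $L^\gamma$ bound on $\r_\dl$, closes the estimate, in analogy with Lemma~4.1 of \cite{f1}.

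\textbf{Step 3.} To conclude, send $k\to\infty$ in the renormalized equation for $\ov{T_k(\r)}$. Chebyshev's inequality together with \eqref{ml3} gives $\|T_k(\r_\dl) - \r_\dl\|_{L^1(\O\times(0,T))} \le c\,k^{-(\gamma-1)}$; letting $\dl \to 0$ yields $\|\ov{T_k(\r)} - \r\|_{L^1} \to 0$ as $k \to \infty$, and interpolation with the uniform $L^\gamma$ bound produces $\ov{T_k(\r)} \to \r$ in $L^p(\O\times(0,T))$ for every $1\le p<\gamma$. The boundedness and continuity of $b$ and $b'$, combined with the weak $L^2$ convergence of $\Dv\u$, then allow passage to the limit in each term on the left-hand side, while the right-hand side vanishes by Step 2. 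The resulting identity \eqref{m38} is the desired renormalized continuity equation.
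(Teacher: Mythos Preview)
Your overall strategy matches the paper's: renormalize the equation for $\ov{T_k(\r)}$ via DiPerna--Lions, then send $k\to\infty$. Step~1 and Step~3 are essentially correct. The gap is in Step~2.

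You assert that the \emph{unweighted} defect
\[
\ov{(T_k'(\r)\r-T_k(\r))\Dv\u}\longrightarrow 0\quad\text{in }L^1(\O\times(0,T))
\]
and try to justify this by interpolating Lemma~\ref{m34} with the uniform $L^\gamma$ bound on $\r_\dl$. But Lemma~\ref{m34} controls $T_k(\r_\dl)-T_k(\r)$, \emph{not} $T_k'(\r_\dl)\r_\dl-T_k(\r_\dl)$. On the full space--time domain the latter obeys only the crude bound $|T_k'(z)z-T_k(z)|\le Ck\,\chi_{\{z\ge k\}}$, which gives
\[
\|T_k'(\r_\dl)\r_\dl-T_k(\r_\dl)\|_{L^{\gamma+1}}^{\gamma+1}\le Ck^{\gamma+1}\,|\{\r_\dl\ge k\}|\le C'k;
\]
interpolated against the $L^1$ bound $ck^{1-\gamma}$, this yields an $L^2$ norm of order $k^{(2-\gamma)/2}$, which does \emph{not} vanish when $\f{3}{2}<\gamma\le 2$. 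Your sketch never explains how the oscillation bound on $T_k(\r_\dl)-T_k(\r)$ is supposed to transfer to a uniform-in-$k$ estimate on $T_k'(\r_\dl)\r_\dl-T_k(\r_\dl)$ over all of $\O\times(0,T)$.

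The paper avoids this by proving only what is actually needed, namely that $b'(\ov{T_k(\r)})\,\ov{(T_k'(\r)\r-T_k(\r))\Dv\u}\to 0$ in $L^1$. Since $b'(z)=0$ for $z\ge M$, the factor $b'(\ov{T_k(\r)})$ restricts the integral to the set $Q_{k,M}=\{\ov{T_k(\r)}\le M\}$. On $Q_{k,M}$ one writes
\[
|T_k'(\r_\dl)\r_\dl-T_k(\r_\dl)|\le T_k(\r_\dl)\le |T_k(\r_\dl)-T_k(\r)|+|T_k(\r)-\ov{T_k(\r)}|+M,
\]
and now Lemma~\ref{m34} (applied to both difference terms) furnishes a uniform-in-$k$ bound on $\|T_k'(\r_\dl)\r_\dl-T_k(\r_\dl)\|_{L^{\gamma+1}(Q_{k,M})}$. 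Interpolating this against the $L^1$ decay $ck^{1-\gamma}$ then gives $\|T_k'(\r_\dl)\r_\dl-T_k(\r_\dl)\|_{L^2(Q_{k,M})}\to 0$, which is exactly what closes the argument. The localisation to $Q_{k,M}$ --- made possible by the compact support of $b'$ --- is the missing ingredient in your Step~2.
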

\begin{proof}
Regularizing \eqref{m35}, one gets
\begin{equation}\label{m36}
\partial_t S_m[\overline{T_k(\rho)}]+\textrm{div}(S_m[\overline{T_k(\rho)}]\textrm{\textbf{u}})+S_m[\overline{(T'_k(\rho)\rho
-T_k(\rho))\textrm{div} \textbf{\textrm{u}}}]=r_m,
\end{equation}
where $S_m[v]=v_m*v$ are the standard smoothing operators and
$r_m\rightarrow 0$ in $L^2([0,T]; L^2(\R^3))$ for any fixed $k$ (see
Lemma 2.3 in \cite{p1}). Now, we are allowed to multiply \eqref{m36} by
$b'(S_m[\ov{T_k(\r)}])$. Letting $m\rightarrow \infty$, we obtain
\begin{equation}\label{m39}
\begin{split}
&\partial_t b[\ov{T_k(\r)}]+\Dv(b[\ov{T_k(\r)}]\u)+(b'(\ov{T_k(\r)})\ov{T_k(\r)}
-b(\ov{T_k(\r)}))\Dv \u\\
&=b'(\ov{T_k(\r)})[\ov{(T'_k(\r)\r
-T_k(\r))\Dv \u}] \textrm{ in }
\mathcal{D}'((0,T)\times\R^3).
\end{split}
\end{equation}
At this stage, the main idea is to let $k\rightarrow \infty$ in
\eqref{m39}. We have
\begin{equation*}
\ov{T_k(\r)}\rightarrow \r \; \textrm{ in } L^p(\O\times (0,T))\textrm{ for
any } 1\leq p<\gamma, \quad \textrm{ as } k\rightarrow\infty,
\end{equation*}
since
\begin{equation*}
\|\ov{T_k(\r)}-\r\|_{L^p(\O\times (0,T))}\leq
\liminf_{\dl\rightarrow 0+}\|
T_k(\r_\dl)-\r_\dl\|_{L^p(\O\times (0,T))},
\end{equation*}
and
\begin{equation}\label{m41}
\|T_k(\r_\dl)-\rho_\dl\|^p_{L^p(\O\times (0,T))}\leq
2^pk^{p-\gamma}\|\r_\dl\|^{\gamma}_{L^{\gamma}(\O\times (0,T))}\leq ck^{p-\gamma}.
\end{equation}

Thus \eqref{m39} will imply \eqref{m38} provided we show
\begin{equation*}
b'(\ov{T_k(\r)})[\ov{(T'_k(\r)\r
-T_k(\r))\Dv \u}]\rightarrow 0 \textrm{
in } L^1(\O\times (0,T)) \textrm{ as } k\rightarrow\infty.
\end{equation*}
To this end, let us denote
$$Q_{k,M}=\{(t,x)\in \O\times (0,T) \mid \ov{T_k(\r)}\leq M\},$$
then
\begin{equation*}
\begin{split}
&\int_0^T\!\!\!\!\int_{\O}\left|b'(\ov{T_k(\r)})[\ov{(T'_k(\r)\r
-T_k(\r))\Dv \u}]\right|\,\mathrm{d}x\mathrm{d}t\\
&\leq\sup_{0\leq z\leq
M}|b'(z)|\iint_{Q_{K,M}}\left|\ov{(T'_k(\r)\r
-T_k(\r))\Dv \u}\right|\,\mathrm{d}x\mathrm{d}t\\
&\leq\sup_{0\leq z\leq M}|b'(z)|\liminf_{\dl\rightarrow
0+}\|(T'_k(\r_\dl)\r_\dl
-T_k(\r_\dl))\Dv
\u_\dl\|_{L^1(Q_{k,M})}\\
&\leq\sup_{0\leq z\leq M}|b'(z)|\sup_{\dl}\|
\u_\dl\|_{L^2([0,T];H^1(\O))}\liminf_{\dl
\rightarrow 0+}\| T'_k(\r_\dl)\rho_\dl
-T_k(\r_\dl)\|_{L^2(Q_{k,M})}\\
&\leq c\liminf_{\dl \rightarrow 0+}\|
T'_k(\r_\dl)\r_\dl
-T_k(\r_\dl)\|_{L^2(Q_{k,M})}.
\end{split}
\end{equation*}
Now, by interpolation, one has
\begin{equation}\label{m42}
\begin{split}
&\| T'_k(\r_\dl)\r_\dl
-T_k(\r_\dl)\|_{L^2(Q_{k,M})}^2 \\
&\leq \| T'_k(\r_\dl)\r_\dl
-T_k(\r_\dl)\|_{L^1(\O\times (0,T))}^{\f{\gamma-1}{\gamma}}\|
T'_k(\r_\dl)\r_\dl
-T_k(\r_\dl)\|_{L^{\gamma+1}(Q_{k,M})}^{\f{\gamma+1}{\gamma}}.
\end{split}
\end{equation}
Similarly to \eqref{m41}, we have
\begin{equation*}
\| T'_k(\r_\dl)\r_\dl
-T_k(\r_\dl)\|_{L^1(\O\times (0,T))}\leq
ck^{1-\gamma}\sup_{\dl}\|\r_\dl\|^{\gamma}_{L^{\gamma}}\leq
ck^{1-\gamma},
\end{equation*}
and, using $ T'_k(z)z\leq T_k(z)$,
\begin{equation}\label{m43}
\begin{split}
&\frac12\|T'_k(\r_\dl)\r_\dl
-T_k(\r_\dl)\|_{L^{\gamma+1}(Q_{k,M})}\\
&\leq \| T_k(\r_\dl)
-T_k(\r)\|_{L^{\gamma+1}(\O\times (0,T))}+\|
T_k(\r)\|_{L^{\gamma+1}(Q_{k,M})}\\
&\leq \| T_k(\r_\dl)
-T_k(\r)\|_{L^{\gamma+1}(\O\times (0,T))}+\|
\ov{T_k(\r)}\|_{L^{\gamma+1}(Q_{k,M})} \\
&\qquad +\|
\ov{T_k(\r)}
-T_k(\r)\|_{L^{\gamma+1}(\O\times (0,T))}\\
&\leq \| T_k(\r_\dl)
-T_k(\r)\|_{L^{\gamma+1}(\O\times (0,T))}+M c(\O)\\
&\qquad +\|\ov{T_k(\r)}-T_k(\r)\|_{L^{\gamma+1}(\O\times (0,T))}.
\end{split}
\end{equation}
From Lemma \ref{m34} and \eqref{m43}, we obtain
\begin{equation*}
\limsup_{\dl\rightarrow 0+}\| T'_k(\r_\dl)\r_\dl
-T_k(\r_\dl)\|_{L^{\gamma+1}(Q_{k,M})}\leq 4c+2M c(\O),
\end{equation*}
which, together with \eqref{m42}-\eqref{m43}, completes the proof of Lemma \ref{m40}.
\end{proof}

\subsection{ Strong convergence of the density}
Now, we can complete the proof of Theorem \ref{mt}. To this end, we
introduce a sequence of functions $L_k\in C^1(R)$:
\begin{equation*}
L_k(z)=\begin{cases}z\textrm{ln}z,&  0\leq z<k,\\
z\textrm{ln}(k)+z\int_k^z\frac{T_k(s)}{s^2}\mathrm{d}s, & z\geq
k.\end{cases}
\end{equation*}
Noting that $L_k$ can be written as
\begin{equation}\label{ml51}
L_k(z)=\beta_k z+b_k(z),
\end{equation}
where $b_k$ satisfies the conditions  in Lemma \ref{m40}, we can use
the fact that $\rho_\dl, \u_\dl$
are renormalized solutions of \eqref{m2} to deduce
\begin{equation}\label{ml45}
\partial_t L_k(\r_\dl)+
\Dv(L_k(\r_\dl)\u_\dl)+T_k(\r_\dl)\Dv
\u_\dl=0.
\end{equation}
Similarly, by \eqref{m26} and Lemma \ref{m40}, we have
\begin{equation}\label{m46}
\partial_t L_k(\rho)+
\textrm{div}(L_k(\rho)\textbf{\textrm{u}})+T_k(\rho)\textrm{div}
\textbf{\textrm{u}}=0,
\end{equation}
in $\mathcal{D}'((0,T)\times \Omega)$. By \eqref{ml45}, we can assume, as $\dl\to 0$,
\begin{equation*}
L_k(\r_\dl)\rightarrow \ov{L_k(\r)} \textrm{ in }
C([0,T];L^{\gamma}_{weak}(\O)).
\end{equation*}

Taking the difference of \eqref{ml45} and \eqref{m46} and integrating with
respect to t,  we get
\begin{equation}\label{m47}
\begin{split}
&\int_{\O}(L_k(\r_\dl)-L_k(\r))\phi\,\mathrm{d}x\\
&=\int_0^t\int_{\O}\left((L_k(\r_\dl)\u_\dl-L_k(\r)\u)\cdot\nabla
\phi+(T_k(\r)\Dv\u-T_k(\r_\dl)\Dv\u_\dl)\phi\right)\mathrm{d}x\mathrm{d}t,
\end{split}
\end{equation}
for any $\phi\in \mathcal{D}(\O)$. Passing to the limit for
$\dl\rightarrow 0$ and making use of \eqref{m47}, one obtains
\begin{equation}\label{m50}
\begin{split}
&\int_{\O}(\ov{L_k(\r)}-L_k(\r))\phi\,\mathrm{d}x\\
&=
\int_0^t\int_{\O}(\ov{L_k(\r)}
-L_k(\r))\u\cdot\nabla\phi\,\mathrm{d}x\mathrm{d}t\\
&\quad +\lim_{\dl\rightarrow 0+}\int_0^t\int_{\O}(T_k(\r)\Dv
\u-T_k(\r_\dl)\Dv\u_\dl)\phi\,\mathrm{d}x\mathrm{d}t,
\end{split}
\end{equation}
for any $\phi\in \mathcal{D}(\Omega)$.

Since the velocity components $u^i, i=1,2,3, $ belong to $L^2([0,T];
W^{1,2}_0(\O))$, one has the following (see Theorem 4.2 in \cite{f2}):
\begin{equation*}
\f{|\u|}{\textrm{dist}[x,\partial\O]}\in
L^2([0,T]; L^2(\O)).
\end{equation*}
Let us consider a sequence of functions $\phi_m\in
\mathcal{D}(\O)$ which  approximate the characteristic
function of $\O$ such that
\begin{equation}\label{aa1}
\begin{split}
&0\leq\phi_m\leq 1, \quad \phi_m(x)=1 \textrm{ for all x such that
dist}[x,\partial\Omega]\geq\f{1}{m},  \\
&\textrm{and} \; |\nabla\phi_m(x)|\leq 2m \textrm{ for all } x\in \O.
\end{split}
\end{equation}
Taking the sequence $\phi=\phi_m$ as the test functions in \eqref{m50},
making use of the boundary conditions in \eqref{e4}, and passing to the
limit as $m\rightarrow \infty$, one has
\begin{equation}\label{m52}
\int_{\O}(\ov{L_k(\r)}-L_k(\r))\,\mathrm{d}x=
\int_0^t\!\!\!\int_{\O}T_k(\r)\Dv \u
\,\mathrm{d}x\mathrm{d}t-\lim_{\dl\rightarrow
0+}\int_0^t\!\!\!\int_{\O}T_k(\r_\dl)\Dv
\u_\dl\,\mathrm{d}x\mathrm{d}t.
\end{equation}
We observe that the term $\overline{L_k(\r)}-L_k(\r)$ is bounded by
\eqref{ml51}.

At this stage, the main idea is to let $k\rightarrow\infty$ in
\eqref{m52}. By  \eqref{ml3}, we can assume
\begin{equation*}
\rho_\varepsilon
\textrm{ln}(\rho_\varepsilon)\rightarrow\overline{\rho
\textrm{ln}(\rho)} \textrm{ weakly star in }
L^{\infty}([0,T];L^\alpha(\Omega)) \textrm{ for all }
1\leq\alpha<\gamma.
\end{equation*}
We also have
\begin{equation*}
\ov{L_k(\r)}\rightarrow \ov{\r \textrm{ln}(\r)}
\textrm{ in } L^\infty([0,T]; L^\alpha(\O)) \textrm{ as }
k\rightarrow\infty\textrm{ for all } 1\leq\alpha<\gamma,
\end{equation*}
since, by \eqref{ml3},
\begin{equation*}
\lim_{k\rightarrow\infty}r(k)=0, \quad \textrm{ where } r(k):=
\text{meas}\{(x,t)\in\O\times (0,T) |\rho_\dl(x,t)\geq k\};
\end{equation*}
and because $L_k(z)\leq z\textrm{ln}z$, repeating the similar procedure to \eqref{m18},
we have
\begin{equation*}
\begin{split}
&\|\ov{L_k(\r)}-\ov{\r\textrm{ln}(\r)}\|_{L^\infty([0,T]; L^\alpha(\O))}\\
&\leq\sup_{t\in[0,T]}\liminf_{\dl\rightarrow 0+}\| L_k(\r_\dl)-\r_\dl
\textrm{ln}(\r_\dl)\|_{L^\infty([0,T];L^\alpha(\O))}\\
&\leq 2q(k)\sup_{\dl}\sup_{t\in[0,T]}\max\{1, \int_{\O}M(\r_\dl^\alpha
|\textrm{ln}\r_\dl|^\alpha)\,\mathrm{d}x\}\\
&\leq 2q(k)\sup_{\dl}\sup_{t\in[0,T]}\max\{1, 2\int_{\O}(1+\r_\dl^\alpha
|\textrm{ln}\r_\dl|^\alpha)\textrm{ln}(1+\r_\dl^\alpha
|\textrm{ln}\r_\dl|^\alpha)\,\mathrm{d}x\}\\
&\leq 2q(k)\sup_{\dl}\sup_{t\in[0,T]}\max\{1, c(\alpha)\textrm{meas}\{\O\}+c(\alpha)\int_{\O\cap\{\r_\dl\geq e\}}\r_\dl^\alpha
|\textrm{ln}\r_\dl|^{\alpha+1}\,\mathrm{d}x\}\\
&\leq 2q(k)\sup_{\dl}\sup_{t\in[0,T]}\max\{1, c(\alpha)\textrm{meas}\{\O\}+c(\alpha, \gamma)\int_{\O}\r_\dl^\gamma
\,\mathrm{d}x\}\\
&\leq cq(k)\rightarrow 0, \; \textrm{ as } k\rightarrow\infty,
\end{split}
\end{equation*}
where the function M is defined in \eqref{l1}, c is a constant independent of $\dl$ and
\begin{equation*}
q(k):=\|\chi_{[\r_\dl\geq
k]}\|_{L_N(\O)}\leq
\left(N^{-1}\left(\f{1}{r(k)}\right)\right)^{-1}.
\end{equation*}
Similarly, we have
\begin{equation*}
L_k(\r)\rightarrow \r \textrm{ln}(\r)\textrm{ in }
L^\infty([0,T]; L^\alpha(\O)) \textrm{ as }
k\rightarrow\infty, \; \textrm{ for all } 1\leq\alpha<\gamma,
\end{equation*}
and, by Lemma \ref{m34},
\begin{equation}\label{m61}
T_k(\r)\rightarrow \ov{T_k(\r)}\textrm{ in } L^\alpha([0,T];
L^\alpha(\O)) \textrm{ as } k\rightarrow\infty, \; \textrm{ for all }
1\leq\alpha<\gamma+1.
\end{equation}

Finally, making use of Lemma \ref{m31} and the monotonicity of the
pressure (see \eqref{m32}), we obtain the following estimate on the right hand side of \eqref{m52}:
\begin{equation}\label{m62}
\begin{split}
&\int_0^t\int_{\O}T_k(\r)\Dv \u
\,\mathrm{d}x\mathrm{d}t-\lim_{\dl\rightarrow
0+}\int_0^t\int_{\O}T_k(\r_\dl)\Dv
\u_\dl\,\mathrm{d}x\mathrm{d}t \\
&\leq \int_0^t\int_{\O}(T_k(\r)-\ov{T_k(\r)})\Dv
\u\,\mathrm{d}x\mathrm{d}t.
\end{split}
\end{equation}
From \eqref{m61} and the Sobolev embedding theorem,  we see that the
right hand side of \eqref{m62} tends to zero as $k\rightarrow\infty$.
Accordingly, one can pass to the limit for $k\rightarrow\infty$ in
\eqref{m52} to conclude
\begin{equation}\label{m71}
\int_{\Omega}\left(\ov{\r \textrm{ln}(\r)}-\r
\textrm{ln}(\r)\right)(x,t)\,\mathrm{d}x=0,  \;   \text{for} \; t\in [0,T] \; a.e.
\end{equation}
Because of the convexity of the function $z\rightarrow
z\textrm{ln}z$, we have
\begin{equation*}
\ov{\r \textrm{ln}(\r)}\geq\r \textrm{ln}(\r), \;  \textrm{
a.e.} \textrm{ in } \O\times (0,T),
\end{equation*}
which, combining with \eqref{m71}, implies
\begin{equation}\label{m72}
\ov{\r\textrm{ln}(\r)}(t)=\r \textrm{ln}(\r)(t),\;
\textrm{ for  } t\in [0,T] \; a.e.
\end{equation}
Theorem 2.11 in \cite{f2}, combined with \eqref{m72}, implies
\begin{equation*}
\rho_\varepsilon\rightarrow\rho, \; \textrm{ a.e. in } \O\times (0,T).
\end{equation*}
From the estimate \eqref{ml3} on $\r$, together with Proposition 2.1 in \cite{f2},  again we know,
\begin{equation*}
\r_\dl\rightarrow\r \textrm{ weakly in }
L^1(\O\times (0,T)),
\end{equation*}
subject to a subsequence.
By  Theorem 2.10 in \cite{f2}, we know that for any
$\eta>0$, there exists $\sigma>0$ such that for all $\dl>0$,
\begin{equation*}
\int_{E}\r_\dl(t,x)\,\mathrm{d}x\mathrm{d}t<\eta,
\end{equation*}
for any measurable set $E\subset \O\times (0,T)$ with
$\textrm{meas}\{E\}<\sigma$.

On the other hand, by virtue of Egorov's Theorem, for $\sigma>0$
given above, there exists a measurable set $E_\sigma\subset
\O\times (0,T)$ such that
\begin{equation*}
\textrm{meas}\{E_\sigma\}<\sigma,\textrm{ and }
\r_\dl(x,t)\rightarrow \r(x,t)\textrm{ uniformly in }
\O\times (0,T)-E_\sigma.
\end{equation*}
Therefore, we have
\begin{equation}\label{m80}
\begin{split}
&\iint_{\O\times (0,T)}|\r_\dl-\rho|\,\mathrm{d}x\mathrm{d}t\\
&\leq
\iint_{E_\sigma}|\r_\dl-\r|\,\mathrm{d}x\mathrm{d}t+\iint_{\O\times (0,T)-E_\sigma}
|\r_\dl-\r|\,\mathrm{d}x\mathrm{d}t\\
&\leq 2\eta+T\textrm{meas}\{\O\}\sup_{(x,t)\in
E^{c}_\sigma}|(\r_\dl-\r)(x,t)|,
\end{split}
\end{equation}
which tends to zero if we first let $\dl\rightarrow 0+$, and
then let $\eta\rightarrow 0+$. The strong convergence of the
sequence $\r_\dl$ in $L^1(\O\times (0,T))$ follows from
\eqref{m80}.

The proof of Theorem \ref{mt} is completed.

\bigskip

\section{Large-Time Behavior of Weak Solutions}
Our final goal in this paper is to study the large-time behavior of the finite energy weak solutions, whose existence is ensured by
Theorem \ref{mt}.

First of all, from Theorem \ref{mt}, we have
\begin{equation}\label{x2b}
\ess_{t>0}E(t)+\int_0^\infty\!\!\!\int_\O\left\{\mu|D\u|^2+(\lambda+\mu)(\Dv\u)^2+\nu|\nabla\times\H|^2\right\}\,\mathrm{d}x\mathrm{d}t\\
\leq E(0).
\end{equation}
Following the idea in \cite{f5}, we consider a sequence
\begin{equation*}
\begin{cases}
\r_m(x,t):= \r(x,t+m);\\  
\u_m(x,t):= \u(x,t+m); \\
\H_m(x,t):= \H(x,t+m),
\end{cases}
\end{equation*}
for all integer m, and $t\in(0,1), \;x\in\O$.
It is easy to see that \eqref{x2b} yields uniform bounds of
\begin{equation*}
\r_m\in L^\infty([0,1]; L^\gamma(\O)), \quad \H_m\in L^\infty([0,1]; L^2(\O))
\end{equation*}
\begin{equation*}
\sqrt{\r_m}\u_m\in L^\infty([0, 1]; L^2(\O)), \quad \r_m\u_m\in L^\infty([0,1]; L^{\f{2\gamma}{\gamma+1}}(\O)),
\end{equation*}
which are independent of m.
Moreover, we have
\begin{equation}\label{x3}
\lim_{m\rightarrow \infty}\int_0^{1}\left(\|\nabla\u_m\|^2_{L^2(\O)}+\|\nabla\times
\H_m\|^2_{L^2(\O)}\right)\mathrm{d}t=0.
\end{equation}
Hence, choosing a subsequence if necessary, we can assume that, as $m\to\infty$,
$$\r_m(x,t)\rightarrow\r_s \textrm{ weakly in } L^\gamma(\O\times (0,1));$$
$$\u_m(x,t)\rightarrow\u_s \textrm{ weakly in } L^2([0,1];H_0^1(\O));$$
$$\H_m(x,t)\rightarrow\H_s \textrm{ weakly in } L^2([0,1];H_0^1(\O)).$$
Furthermore,
$$\int_\O\r_s\,\mathrm{d}x\leq \liminf_{m\rightarrow \infty}\int_\O\r_m(t)\,\mathrm{d}x\leq C(E_0).$$
Therefore, from the Poincar\'{e} inequality and \eqref{x3}, we know
$$\lim_{m\rightarrow \infty}\int_0^{1}\|\u_m\|^2_{L^2(\O)}\mathrm{d}t=0.$$
This, combined with the compactness of $H^1\hookrightarrow L^2$, implies
$$\u_s=0, \textrm{ a.e in }\O\times (0,1).$$
Similarly, we know that
\begin{equation}\label{cc1}
\H_s=0, \;\textrm{ a.e in }\O\times (0,1).
\end{equation}

On the other hand, by Sobolev inequality, H\"{o}lder inequality, \eqref{x2b} and \eqref{x3}, we have
\begin{equation}\label{x4b}
\lim_{m\rightarrow \infty}\int_0^{1}\left(\|\r_m|\u_m|^2\|_{L^{\f{3\gamma}{\gamma+3}}(\O)}+\|\r_m|\u_m|\|^2_
{L^{\f{6\gamma}{\gamma+6}}(\O)}\right)\mathrm{d}t=0.
\end{equation}
Since $\r, \u$ are solutions to (1.1) in the sense of renormalized solutions, one has, in particular,
\begin{equation}\label{x5}
\rho_t +\Dv(\r\u)=0\textrm{ in }\mathcal{D}'(\O\times (0,T)).
\end{equation}
Then taking test functions $\varphi(x,t)=\psi(t)\phi(x)$ in \eqref{x5}, where $\psi(t)\in\mathcal{D}(0,1), \phi\in\mathcal{D}(\O)$,
we have, using integrating by parts,
$$\int_0^1\left(\int_\O\r_m\phi\,\mathrm{d}x\right)\psi'(t)\mathrm{d}t
+\int_0^1\int_\O\r_m\u_m\nabla\phi\psi\,\mathrm{d}x\mathrm{d}t=0.$$
Letting $m\rightarrow\infty$ and using \eqref{x4b}, we obtain
$$\int_0^1\left(\int_\O\r_s\phi\,\mathrm{d}x\right)\psi'(t)\mathrm{d}t=0.$$
This implies that $\r_s$ must be independent of t, by the arbitrariness of $\psi$.

Now, following  the procedure used in Section 5, or in Lemma 4.1 in \cite{f1}, one can obtain
$$\r_m^{\gamma+\theta}\textrm{ is bounded in }L^1(\O\times (0,1)),\textrm{ independently of }m>0,$$ for some $\theta>0$.
Consequently, one has
\begin{equation}\label{x6}
\r_m^\gamma\rightarrow \ov{\r^\gamma}\textrm{ weakly in
}L^1(\O\times (0,1)).
\end{equation}
Therefore, passing to the limit in the momentum balance equation of
\eqref{e1} and using \eqref{x3}, \eqref{x4b}, we get
\begin{equation}\label{y1}
\nabla\ov{\r^\gamma}=0\textrm{ in }\mathcal{D}'(\O).
\end{equation}

Now, we show that the convergence in \eqref{x6} is indeed strong. To this end, similarly to \cite{f5}, we consider
$$G(z)=z^\alpha, \quad 0<\alpha<\min\left\{\f{1}{2\gamma}, \f{\theta}{\theta+\gamma}\right\},$$ so that $b(z)=G(z^\gamma)$
may be used in \eqref{e5}.
Consider the vector functions
$$[G(\r_m^\gamma),0,0,0]\textrm{  and  }[\r_m^\gamma,0,0,0]$$
of the time variable $t$ and the spatial coordinates $x$. Using \eqref{e5} and \eqref{x3}, \eqref{x4b}, we get
\begin{equation}\label{x7}
\rm{Div}[G(\r_m^\gamma),0,0,0] \textrm{  is precompact in  } W_{loc}^{-1,q_1}(\O\times (0,1)),
\end{equation}
for some $q_1>1$ small enough.
Similarly, making use of the momentum balance equation in \eqref{e1}, \eqref{x3}, and \eqref{x4b}, we obtain
\begin{equation}\label{x8}
\rm{Curl}[\r_m^\gamma,0,0,0]\textrm{  is precompact in  } W_{loc}^{-1,q_2}(\O\times (0,1)),
\end{equation}
for some $q_2>1$, where
$$\rm{Div}(f_0, f_1, f_2, f_3):=(f_0)_t+\Sigma_{i=1}^3\partial_{x_i}f_i,$$
and
$$\rm{Curl}(f_0, f_1, f_2, f_3):=\partial_i f_j-\partial_j f_i, \quad x_0:=t,\quad i,j=0,...,3.$$
Meanwhile, we can assume
\begin{equation}\label{x9}
G(\r_m^\gamma)\rightarrow\ov{G(\r^\gamma)}\textrm{ weakly in }L^{p_2}(\O\times (0,1)),
\end{equation}
and
\begin{equation}\label{x10}
G(\r_m^\gamma)\r_m^\gamma\rightarrow\ov{G(\r^\gamma)\r^\gamma}\textrm{ weakly in }L^{r}(\O\times (0,1)),
\end{equation}
with
$$p_2=\f{1}{\alpha}, \quad \f{1}{p_1}+\f{1}{p_2}=\f{1}{r}<1.$$
Using the $L^p$-version of the celebrated div-curl lemma (see \cite{zy}), we deduce that from \eqref{x7}-\eqref{x10}
\begin{equation}\label{x11}
\ov{G(\r^\gamma)}\ov{\r^\gamma}=\ov{G(\r^\gamma)\r^\gamma}.
\end{equation}
As G is strictly monotone, \eqref{x11} implies $\ov{G(\r^\gamma)}=G(\ov{\r^\gamma})$. Since $L^{\f{1}{\alpha}}$ is uniformly convex, this yields
strong convergence in \eqref{x6}.
Therefore, we have
$$\r_m\rightarrow\r_s\textrm{  strongly in  }L^\gamma(\O\times (0,1)).$$
This, combined with \eqref{x6} and \eqref{y1}, gives
$\nabla\r_s^\gamma=0$ in the sense of distributions, which implies
that $\r_s$ is independent of the spatial variables.

Finally, by the energy inequality, the energy converges to a finite constant as t goes to infinity:
$$E_\infty:=\ov{\lim_{t\rightarrow\infty}}E(t),$$
and, by \eqref{x4b} and \eqref{cc1},
$$\lim_{m\rightarrow\infty}\int_m^{m+1}\int_\O\r|\u|^2\,\mathrm{d}x=0,$$
$$\lim_{m\rightarrow\infty}\int_m^{m+1}\int_\O|\H|^2\,\mathrm{d}x=0.$$
Thus
\begin{equation*}
\begin{split}
E_\infty&=\ov{\lim_{m\rightarrow\infty}}\int_m^{m+1}\int_\O\left(\f{1}{2}\r\u^2+\f{a}{\gamma-1}
\r^\gamma+\f{1}{2}|\H|^2\right)\,\mathrm{d}x\mathrm{d}t\\&=
\int_\O\f{a}{\gamma-1}\r_s^\gamma\,\mathrm{d}x.
\end{split}
\end{equation*}
Furthermore, using the continuity equation in (1.1), one easily observe that
$$\r(x, t)\rightarrow \r_s\textrm{ weakly in }L^\gamma(\O)\textrm{ as }t\rightarrow \infty.$$
Thus, we have
\begin{equation*}
\begin{split}
E_\infty&=\int_\O\f{a}{\gamma-1}\r_s^\gamma\,\mathrm{d}x\leq \liminf_{t\rightarrow \infty}\int_\O\f{a}{\gamma-1}\r^\gamma\,\mathrm{d}x
\leq \limsup_{t\rightarrow \infty}\int_\O\f{a}{\gamma-1}\r^\gamma\,\mathrm{d}x\\
&\leq\ov{\lim_{t\rightarrow\infty}}\int_\O\left(\f{1}{2}\r\u^2+\f{a}{\gamma-1}
\r^\gamma+\f{1}{2}|\H|^2\right)\mathrm{d}x
=\ov{\lim_{t\rightarrow\infty}}E(t)=E_\infty.
\end{split}
\end{equation*}
This implies
$$\lim_{t\rightarrow\infty}\int_\O\f{a}{\gamma-1}\r^\gamma\,\mathrm{d}x
=\int_\O\f{a}{\gamma-1}\r_s^\gamma\,\mathrm{d}x,$$ and \eqref{xx1}
follows since the space $L^\gamma$ is uniformly convex.

This completes the proof of Theorem \ref{mt2}.

\bigskip\bigskip

\section*{Acknowledgments}

Xianpeng Hu's research was supported in part by the National Science Foundation grant  DMS-0604362.
Dehua Wang's research was supported in part by the National Science
Foundation grants DMS-0244487, DMS-0604362, and the Office of Naval
Research grant N00014-01-1-0446.

\bigskip\bigskip

\end{document}